\tikzstyle{fekete}=[circle, draw, thin,fill=black, scale=0.4]
\tikzstyle{feher}=[circle, draw, thin,fill=white, scale=0.4]
\newtheorem{df}{Definition}
\newtheorem{lemma}[df]{Lemma}
\theoremstyle{plain}\newtheorem{theorem}[df]{Theorem}
\theoremstyle{remark}\newtheorem{remark}[df]{Remark}
\theoremstyle{remark}
\theoremstyle{plain}\newtheorem{corollary}[df]{Corollary}
\theoremstyle{plain}\newtheorem{prop}[df]{Proposition}
\title[Spectral measures of factor of i.i.d.\ processes]{Spectral measures of factor of i.i.d.\ processes on vertex-transitive graphs}
\author{\'Agnes Backhausz}
\address{\'Agnes Backhausz. {\rm MTA Alfr\'ed R\'enyi Institute of Mathematics and
E\"otv\"os Lor\'and University; Budapest, Hungary}.}  
 \email{backhausz.agnes@renyi.mta.hu}
 \author{B\'alint Vir\'ag}
 \address{B\'alint Vir\'ag. {\rm Department of Mathematics, University of Toronto, Canada and MTA Alfr\'ed R\'enyi  Institute of Mathematics, Budapest, Hungary.}}
 \email{balint@math.toronto.edu}
\subjclass[2010]{60G15}
\date{2 September 2016}
\begin{document}

\begin{abstract}
We prove that a measure  on $[-d, d]$  is the spectral measure of a factor of i.i.d.\ process on a vertex-transitive infinite graph if and only if it is absolutely continuous with respect to the spectral measure of the graph. Moreover, we show that the set of spectral measures of factor of i.i.d.\ processes and that of $\bar d_2$-limits of factor of i.i.d.\ processes are the same. 
\end{abstract}

\maketitle
\thispagestyle{empty}

\noindent {\small {\it Keywords:} factor of i.i.d.; Gaussian process, spectral measure. }

\section{Introduction}

We consider invariant random processes on vertex transitive graphs that can be performed with ''randomized local'' algorithms (factor of i.i.d.\ processes).   The aim of the paper is characterizing the covariance structures of factor of i.i.d.\ processes, in terms of the absolute continuity of their spectral measure with respect to the spectral measure of the graph.  

\subsection{Factor of i.i.d. processes}Let  $G=(V(G), E(G))$ be a vertex-transitive graph with countable vertex set $V(G)$.  We assign a random variable $X_v$ to each $v\in V(G)$. We get an {\it invariant random process} if the joint distribution is invariant under the automorphisms of $G$. These are the analogues of stationary processes on $\mathbb Z$, and they may be interesting on their own right \citep{arnaudregi, arnaud}. 

Among invariant random processes, we deal with {\it factor of i.i.d.\ processes}, see   e.g.\ \citet{gabor, HLSz} or the paper of \cite{lyons} and the references therein. Results of randomized local algorithms (or constant-time parallelized algorithms) belong to this class. Furthermore, factor of i.i.d.\ processes can be useful for 
finding large independent sets (Cs\'oka, Gerencs\'er, Harangi and Vir\'ag 2014, Harangi and Vir\'ag 2013, Hoppen and Wormald 2013), matchings on nonamenable graphs \citep{nazarov, csokalipp}, colorings, other structures (Gaboriau and Lyons 2009, Kun 2013, Backhausz and Szegedy 2014). This family of processes may also be interesting from an ergodic theoretic point of view \citep{lewis}, as they are the factors of the Bernoulli shift. To define factor of i.i.d.\ processes, loosely speaking, we start with independent and identically distributed labels (from $\mathbb R$) on the vertices. Then each vertex gets a new label, depending on the labelled rooted graph as it is seen from that vertex. The rule is fixed, and it is the same for all vertices. See Subsection \ref{fiid} for the precise definition. 

\subsection{Spectral measures, covariance structures and $\bar d_2$ limit.} We assign a finite measure on $\mathbb R$ to invariant random processes on   $G$, which is the spectral measure of the process. (Throughout the paper, we always consider Borel measures.) This measure is the spectral measure of the graphing associated to the process (Section \ref{plancherel}). This is also related to the covariance structure of the process, as the following is satisfied for a process $X$: 
\[\mathbb E( [A^k X]_o, X_o)=\langle A^k \delta_o, c_X\rangle_G=\int t^k d\mu_X(t) \qquad (k\geq 0),  \]
where $A$ is the adjacency operator, $c_X: V\rightarrow \mathbb R$ is the covariance structure assigning $\mathrm{cov} (X_o, X_v)$ to each vertex $v$, and $\mu_X$ is the spectral measure of $X$.

\medskip

 Our goal is  characterizing the set of spectral measures of factor of i.i.d.\ processes. As we will see, the same characterization holds for $\bar d_2$-limits of factor of i.i.d.\ processes. The following metric is based on Ornstein's $\bar d$-metric \citep[see e.g.][]{lyons}. 
\begin{df}\label{def:dbar}
The $\bar d_2$-distance of the random invariant processes $X, X'$ (with marginals having finite second moments) is defined as follows. 
\begin{align*}\bar d_2(X, X')^2=\min\{ \mathbb E\big[(Y_o-Y'_o)^2\big]: &\, Y\stackrel{d}= X, Y'\stackrel{d}=X',\\&\quad (Y, Y') \textrm{ is invariant}\}. \end{align*} 
\end{df}

\medskip

If the graph is the $d$-regular tree (throughout the paper, we denote by $T_d$ the  infinite $d$-regular tree), the spectral measure equals to the  Fourier transform of the covariance structure, see e.g.\ \cite{cartier, arnaudregi, arnaud, fpic}. On the other hand, in \cite{decay} a uniform exponential bound was proved for the decay of the correlation sequence of a factor of i.i.d.\ process, and the pointwise closure of the possible correlation sequences was described in the case $G=T_d$ ($d\geq 3$).

\subsection{Main results}

Our main result is the following theorem, which characterizes the spectral measures of factor of i.i.d.\ processes,  linear factor of i.i.d.\ processes (see Definition \ref{def:linfiid} and \ref{def:sphlin}) and  $\bar d_2$-limits of factor of i.i.d.\ processes (Definition \ref{def:dbar}).  
\begin{theorem} \label{tfiid} Fix an infinite vertex-transitive graph $G$. Suppose that all processes below have marginals with mean 0 and finite second moment. The following are equivalent for a finite (Borel) measure on $\mathbb R$.

$(i)$ It is absolutely continuous with respect to 
the spectral measure $\nu$ of the graph $G$. 

$(ii)$  It is the spectral measure of some linear factor of i.i.d.\ process. 

$(iii)$ It is the spectral measure of some factor of i.i.d.\ process.

$(iv)$ It is the spectral measure  of some limit of factor of i.i.d.\ process with respect to the $\bar d_2$-distance.

$(v)$ It is the spectral measure of some invariant process which is the $\bar d_2$-limit of processes with spectral measures  satisfying $(i)$. \end{theorem}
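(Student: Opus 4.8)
I would establish the cycle $(i)\Rightarrow(ii)\Rightarrow(iii)\Rightarrow(iv)\Rightarrow(v)\Rightarrow(i)$. Here $(ii)\Rightarrow(iii)$ and $(iii)\Rightarrow(iv)$ are essentially formal: by Definition~\ref{def:linfiid} a linear factor of i.i.d.\ process is a (measurable) function of the very same i.i.d.\ labels, hence a factor of i.i.d.\ process, and every process is its own $\bar d_2$-limit. So the substance lies in $(i)\Rightarrow(ii)$ (a construction), $(iv)\Rightarrow(v)$ (the ``only if'' for a single process), and $(v)\Rightarrow(i)$ (closedness under $\bar d_2$); the genuinely hard input is the ``only if''.

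For $(i)\Rightarrow(ii)$ I would take a finite $\mu\ll\nu$, set $f=d\mu/d\nu\ge 0$, note $\sqrt f\in L^2(\nu)$ since $\int f\,d\nu=\mu(\mathbb R)<\infty$, and, using that $\nu$ is supported in $[-d,d]$ so polynomials are dense in $L^2(\nu)$, pick polynomials $p_n\to\sqrt f$ in $L^2(\nu)$. With $Z=(Z_v)$ i.i.d.\ standard Gaussian, the finite-range (hence bona fide) linear factor $X^{(n)}:=p_n(A)Z$ satisfies, by self-adjointness of $A$ and invariance, $\mathbb E\big[(q(A)X^{(n)})_oX^{(n)}_o\big]=\int q\,p_n^2\,d\nu$ for every polynomial $q$, i.e.\ its spectral measure is $p_n^2\,\nu$. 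Since $p_n^2\to f$ in $L^1(\nu)$, the $X^{(n)}$ are Cauchy in $L^2(\Omega)$ (driven by a common $Z$), hence $\bar d_2$-convergent, to a limit $X$ which is again a function of $Z$ — a linear factor of i.i.d.\ — with $\int q\,d\mu_X=\lim\int q\,p_n^2\,d\nu=\int q\,f\,d\nu$, so $\mu_X=f\,\nu=\mu$.

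For $(iv)\Rightarrow(v)$ I would reduce to the statement that \emph{every} factor of i.i.d.\ process has spectral measure absolutely continuous with respect to $\nu$: granting this, a $\bar d_2$-limit of factor of i.i.d.\ processes is a fortiori a $\bar d_2$-limit of processes satisfying $(i)$. This ``only if'' is the technical heart. For distance-transitive $G$ it is soft given $(v)\Rightarrow(i)$: a block (finite-range) factor has an $\mathrm{Aut}(G)$-invariant, finitely supported covariance structure, which on such $G$ equals $q(A)\delta_o$ for a polynomial $q$, so its spectral measure is $q(t)\,d\nu(t)\ll\nu$; an arbitrary factor of i.i.d.\ is an $L^2(\Omega)$-limit of block factors, hence a $\bar d_2$-limit of them, and $(v)\Rightarrow(i)$ applies. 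In general one must instead invoke the Plancherel/graphing description of Section~\ref{plancherel}: $X_o$ (mean zero) lies in the orthocomplement of the constants in $L^2$ of the i.i.d.\ labels, on which the adjacency operator of the associated graphing has spectral type absolutely continuous relative to the type of $A$ on $\ell^2(V)$, which by definition is $\nu$ — for Cayley graphs this is the fact that the Koopman representation of a Bernoulli shift, off the constants, is a multiple of the regular representation; in general it requires the noncommutative harmonic analysis of the graphing and extends the correlation-decay results of \cite{decay}.

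For $(v)\Rightarrow(i)$ I would take $X$ a $\bar d_2$-limit of invariant processes $X_n$ with $\mu_{X_n}\ll\nu$ and choose, by Definition~\ref{def:dbar}, invariant couplings $(Y_n,Y)$ with $Y_n\stackrel d=X_n$, $Y\stackrel d=X$ and $\varepsilon_n^2:=\mathbb E[((Y_n)_o-Y_o)^2]\to 0$. In the graphing of the coupled process, with $E$ the spectral resolution of the adjacency operator and $f_n,f$ representing $(Y_n)_o,Y_o$, the reverse triangle inequality for $\|E(B)\cdot\|$ gives, for every Borel $B$,
\[
\bigl|\sqrt{\mu_X(B)}-\sqrt{\mu_{X_n}(B)}\bigr|=\bigl|\,\|E(B)f\|-\|E(B)f_n\|\,\bigr|\le\|E(B)(f-f_n)\|\le\|f-f_n\|=\varepsilon_n .
\]
If $\nu(B)=0$ then $\mu_{X_n}(B)=0$, so $\sqrt{\mu_X(B)}\le\varepsilon_n\to 0$, whence $\mu_X(B)=0$; thus $\mu_X\ll\nu$. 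I expect the main obstacle to be the ``only if'' direction of the previous paragraph for general vertex-transitive $G$ — replacing classical Fourier analysis by the spectral theory of the graphing. The step $(v)\Rightarrow(i)$ is clean as above, but note that it genuinely uses that $\bar d_2$ supplies an honest coupling, so that $f_n$ and $f$ share one spectral resolution: the mere weak convergence $\mu_{X_n}\to\mu_X$ that one extracts from convergence of covariance structures would be useless here, since weak limits of absolutely continuous measures need not be absolutely continuous.
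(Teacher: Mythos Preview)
Your cycle and the two trivial implications match the paper. For $(i)\Rightarrow(ii)$ you do essentially what the paper does (its Lemma~\ref{lem:gyok}): approximate $\sqrt{d\mu/d\nu}$ in $L^2(\nu)$ by polynomials and pass to the $L^2(\Omega)$-limit to get a (spherical) linear factor of i.i.d.

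Your $(v)\Rightarrow(i)$ is genuinely different from, and slicker than, the paper's. The paper first proves a quantitative inequality (Proposition~\ref{all:dtv}) saying that $\bar d_2$-closeness forces total-variation closeness of the spectral measures, via a fairly delicate approximation (Lemma~\ref{lem:xoyo}), and then uses completeness of $L^1(\nu)$. Your spectral-projection argument in the graphing of the coupling bypasses all of that: from $\bigl|\,\|E(B)f\|-\|E(B)f_n\|\,\bigr|\le\|f-f_n\|$ you read off absolute continuity directly. (Minor point: the coupling, hence $E$, depends on $n$; your inequality is still valid for each $n$ separately, which is all you need.) What you give up is the quantitative total-variation/Hellinger bound, which the paper actually uses later for Corollaries~\ref{cor:lin} and~\ref{cor:dbar}.

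Where you make life harder than necessary is $(iv)\Rightarrow(v)$. You correctly reduce to ``every block factor of i.i.d.\ has spectral measure $\ll\nu$'', but then call this soft only in the distance-transitive case and, for general vertex-transitive $G$, appeal to the Koopman representation of the Bernoulli shift being a multiple of the regular representation --- a statement you leave as a black box outside the Cayley case. The paper handles the general case by an elementary observation (Lemma~\ref{lem:block}): a block factor has finitely supported covariance $c_X\in\ell^2(G)$, and since every $A^k\delta_o$ lies in the closed subspace $\mathcal L=\overline{\{p(A)\delta_o\}}\subset\ell^2(G)$, the moments $\langle A^k\delta_o,c_X\rangle_G$ depend only on the orthogonal projection $\overline{c_X}$ of $c_X$ onto $\mathcal L$. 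Under the isometry $\mathcal L\cong L^2(\nu)$ this projection is some $\widehat{c_X}\in L^2(\nu)$, whence $d\mu_X=\widehat{c_X}\,d\nu\ll\nu$. No distance-transitivity and no representation theory are needed; your anticipated ``main obstacle'' dissolves.
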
 

The equivalence of $(iii)$ and $(iv)$ shows that the conjecture about $\bar d_2$-limits of factor of i.i.d.\ processes can not be refuted based on spectral measures or covariance structures.

\begin{remark}
The proof shows that the family of processes with spectral measures absolutely continuous with respect to any fixed measure on $[-1,1]$ is closed under $\bar d_2$-convergence.
\end{remark}

\medskip

Based on this theorem, we give the characterization of spectral measures of processes that are limits of factor of i.i.d.\ processes in distribution. 

\begin{theorem} \label{tlimfiid} Fix an infinite vertex-transitive graph $G$. Suppose that all processes below have marginals with mean 0 and finite second moment. The following are equivalent for a finite (Borel) measure $\mu$ on $\mathbb R$.

$(i)$ Its support is contained in the support of the spectral measure of $G$, that is, $\mathrm{supp}(\mu)\subseteq \mathrm{supp}(\nu)$.

$(ii)$  It is the spectral measure of the weak limit of some linear factor of i.i.d.\ processes. 

$(iii)$ It is the spectral measure of the weak limit of some factor of i.i.d.\ process.

\end{theorem}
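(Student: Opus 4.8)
The plan is to prove the cycle $(ii)\Rightarrow(iii)\Rightarrow(i)\Rightarrow(ii)$, using Theorem~\ref{tfiid} freely. The implication $(ii)\Rightarrow(iii)$ is immediate: every linear factor of i.i.d.\ process is a factor of i.i.d.\ process, so a weak limit of the former is a weak limit of the latter. For $(iii)\Rightarrow(i)$, suppose $X^{(n)}$ are factor of i.i.d.\ processes with $X^{(n)}\to X$ weakly. Since weak convergence of processes controls only bounded functionals, whereas the spectral measure is a second-moment object, I would first clip: for fixed $T>0$ let $X^{(n)}_T$ be obtained from $X^{(n)}$ by replacing each coordinate by $\max(-T,\min(T,\cdot))$ and subtracting its (constant) mean; this is again a mean-zero factor of i.i.d.\ process, and the recentering leaves covariances unchanged. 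The clipping map is continuous and bounded, so the continuous mapping theorem gives $c_{X^{(n)}_T}(v)\to c_{X_T}(v)$ for every vertex $v$ (with $X_T$ the same clipping of $X$). By the identity $\int t^k\,d\mu_Y(t)=\sum_v [A^k\delta_o](v)\,c_Y(v)$ recalled in Section~\ref{plancherel}, a \emph{finite} sum, the moments of $\mu_{X^{(n)}_T}$ converge to those of $\mu_{X_T}$, and since all these measures live on the compact set $[-d,d]$ we get $\mu_{X^{(n)}_T}\to\mu_{X_T}$ weakly. By Theorem~\ref{tfiid} each $\mu_{X^{(n)}_T}$ is absolutely continuous with respect to $\nu$, hence carried by the closed set $\mathrm{supp}(\nu)$; a weak limit of measures carried by a fixed closed set is carried by it too, so $\mathrm{supp}(\mu_{X_T})\subseteq\mathrm{supp}(\nu)$. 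Letting $T\to\infty$, dominated convergence ($|X_v|\in L^2$) gives $c_{X_T}(v)\to c_X(v)$ for all $v$, and the same argument yields $\mu_{X_T}\to\mu_X$ weakly, hence $\mathrm{supp}(\mu_X)\subseteq\mathrm{supp}(\nu)$, which is $(i)$.

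The substantial implication is $(i)\Rightarrow(ii)$. Fix a finite measure $\mu$ with $\mathrm{supp}(\mu)\subseteq\mathrm{supp}(\nu)$. First I would approximate: since $\mathrm{supp}(\mu)$ is compact and $\nu$ charges every neighbourhood of every point of its support, the measures absolutely continuous with respect to $\nu$ are weak-$*$ dense among finite measures carried by $\mathrm{supp}(\nu)$ of bounded mass — concretely one builds $\mu_n$ as finite convex combinations of normalised restrictions $\nu|_{B(x,\varepsilon)}/\nu(B(x,\varepsilon))$ with $x\in\mathrm{supp}(\mu)\subseteq\mathrm{supp}(\nu)$, so that $\mu_n\ll\nu$, $\mu_n\to\mu$ weakly, and $\mu_n(\mathbb{R})\to\mu(\mathbb{R})$ (all measures sitting in $[-d,d]$). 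By Theorem~\ref{tfiid} each $\mu_n$ is the spectral measure of some linear factor of i.i.d.\ process, and because the covariance structure of a linear factor of i.i.d.\ process depends on the driving i.i.d.\ labels only through their variance, I may take that noise Gaussian; then $Y^{(n)}$ is a centered Gaussian invariant process whose law is determined by its covariance, i.e.\ by $\mu_n$. Since $\mathbb{E}[(Y^{(n)}_o)^2]=\mu_n(\mathbb{R})$ is bounded in $n$, the (identically distributed) marginals are tight, hence the processes $Y^{(n)}$ are tight in the product topology; pass to a subsequence $Y^{(n_k)}\to X$ weakly, with $X$ a centered Gaussian invariant process. A weakly convergent sequence of centered Gaussian vectors with uniformly bounded variances has convergent covariance matrices, so $c_X(v)=\lim_k c_{Y^{(n_k)}}(v)$ for every $v$, whence $\int t^k\,d\mu_X=\lim_k\sum_v[A^k\delta_o](v)\,c_{Y^{(n_k)}}(v)=\lim_k\int t^k\,d\mu_{n_k}=\int t^k\,d\mu$ for all $k\ge0$; two measures on $[-d,d]$ with the same moments coincide, so $\mu_X=\mu$. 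Thus $\mu$ is the spectral measure of the weak limit $X$ of the linear factor of i.i.d.\ processes $Y^{(n_k)}$, which is $(ii)$, and $(iii)$ follows at once.

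The main obstacle in both nontrivial directions is the same: weak convergence of processes only controls bounded functionals, while the spectral measure is manufactured from second moments, so a priori the covariance structure — and hence $\mu_X$ — can jump in the limit. I defuse this by clipping in $(iii)\Rightarrow(i)$, where the clipped processes have uniformly bounded second moments, and by passing to Gaussian processes in $(i)\Rightarrow(ii)$, where a uniform variance bound forces covariance convergence and the covariance determines the law. A secondary point to verify is that the construction behind $(i)\Rightarrow(ii)$ of Theorem~\ref{tfiid} can be run with Gaussian driving noise without changing the spectral measure, which should be routine because the covariance of a linear factor depends on the noise only through its variance.
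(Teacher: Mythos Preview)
Your proof is correct and follows essentially the same route as the paper: both prove $(iii)\Rightarrow(i)$ by truncation/clipping (so that second moments become bounded and covariance structures converge under weak convergence), and both prove $(i)\Rightarrow(ii)$ by approximating $\mu$ weakly by measures $\mu_n\ll\nu$, realising each $\mu_n$ via Theorem~\ref{tfiid} as a linear factor of i.i.d.\ (hence Gaussian) process, and extracting a weakly convergent subsequence whose covariances converge. Your residual worry at the end is unnecessary: in this paper the i.i.d.\ labels defining factor of i.i.d.\ processes are already standard Gaussian (see Subsection~\ref{fiid}), so every linear factor of i.i.d.\ process is Gaussian by construction, as the paper notes explicitly in the remark following the proof.
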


As for the $d$-regular tree, Theorem 5.1 of Backhausz, Szegedy and Vir\'ag (2014) gives a description of the pointwise closure of the correlation sequences of factor of i.i.d.\ processes, which shows the equivalence of $(i)$ and $(iii)$ in Theorem \ref{tlimfiid} after some reformulation.

\begin{remark}
The family of processes that can be modelled on random $d$-regular graphs in an appropriate sense (see e.g.\ \cite{balazs}) is strictly wider than the limit of factor of i.i.d.\ processes; for example,  for large $d$, the independence ratio is twice as large as the proportion of an independent set that can be constructed with factor of i.i.d.\ \citep{gamarnik, mustazee}. However, the possible covariance structures are the same for the two families of invariant random processes, and  the results remain valid. 
\end{remark}

\subsection{Applications} 

\subsubsection*{Process spectrum} The aim of this section is describing the set of points that can be included in the spectrum of an invariant random process.

\begin{df}[Process spectrum of a graph] Let $G$ be a $d$-regular vertex transitive graph. Its process spectrum is defined as follows: 
\[\mathrm{psp}(G)=\overline{\bigcup_X \mathrm{supp}(\mu_X)}\subseteq [-d, d],\]
where the union is for all invariant random processes on $G$ with marginals having finite variance.   The process spectral radius $\varrho^+_p(G)$ is defined by $\sup\{x: x\in \mathrm {psp}(G),\ x<d\}$. 
\end{df}

If $d$ is fixed, one can ask whether there are $d$-regular graphs with process spectral radius arbitrarily close to $2\sqrt{d-1}$. 

\begin{theorem}\label{tpsp} Let $\mu$ be a finite measure with $\mathrm{supp} (\mu)\subseteq \mathrm{psp}(G)$. Then there exists an invariant random process $X$ with spectral measure $\mu$. 
\end{theorem}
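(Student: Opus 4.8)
The plan is as follows. Write $S$ for the family of finite Borel measures on $[-d,d]$ that occur as the spectral measure $\mu_X$ of some invariant random process $X$ on $G$ (with mean-zero, finite-variance marginals), and set $D:=\bigcup_X\mathrm{supp}(\mu_X)$, so that $\mathrm{psp}(G)=\overline D$. We must show that every finite measure $\mu$ with $\mathrm{supp}(\mu)\subseteq\mathrm{psp}(G)$ lies in $S$. I would do this by first establishing three stability properties of $S$ and then building $\mu$ out of ``approximate point masses'' located at points of $D$.

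The three properties are: (a) $S$ is a cone closed under finite sums: if $\mu_1,\dots,\mu_k\in S$ and $c>0$ then $c\mu_1\in S$ and $\mu_1+\dots+\mu_k\in S$, realized by $\sqrt c\,X$ and by the superposition $X^{(1)}+\dots+X^{(k)}$ of independent invariant copies, whose covariance structures (hence spectral measures) add. (b) $S$ is closed under ``polynomial filtering'': for any invariant process $Y$ and any real polynomial $p$, the process $p(A)Y$, with $[p(A)Y]_v=\sum_u p(A)(v,u)Y_u$ (a \emph{finite} linear combination of the $Y_u$, since $p(A)\delta_o$ is finitely supported), is again invariant, and by the Plancherel/graphing correspondence of Section \ref{plancherel} its spectral measure is $p(t)^2\,d\mu_Y(t)$ --- indeed $[p(A)Y]_o$ corresponds to the vector $p(A_{\mathcal G_Y})\xi_o$ for the self-adjoint adjacency operator $A_{\mathcal G_Y}$ of the associated graphing and the root vector $\xi_o$, and the spectral theorem gives $\int t^k\,d\mu_{p(A)Y}=\langle A_{\mathcal G_Y}^k\,p(A_{\mathcal G_Y})\xi_o,\ p(A_{\mathcal G_Y})\xi_o\rangle=\int t^k p(t)^2\,d\mu_Y(t)$. (c) $S$ is closed under weak limits of measures on $[-d,d]$: realize $\mu_n\in S$ by the Gaussian process $X^{(n)}$ with covariance $c_{X^{(n)}}$; since $|c_{X^{(n)}}(v)|\le c_{X^{(n)}}(o)=\mu_n([-d,d])$ is bounded, the laws of $X^{(n)}$ on $\mathbb R^{V(G)}$ are tight, so along a subsequence $X^{(n)}\to X^{*}$ weakly, an invariant Gaussian process; second moments converge (the products $X^{(n)}_uX^{(n)}_v$ are Gaussian with bounded variances, hence bounded in $L^2$ and uniformly integrable), so $\int t^k\,d\mu_{X^{*}}=\lim_n\sum_u(A^k)(o,u)\,\mathbb E\big(X^{(n)}_uX^{(n)}_o\big)=\lim_n\int t^k\,d\mu_n=\int t^k\,d\mu$ for all $k$, and $\mu_{X^{*}}=\mu$ because finite measures on the compact set $[-d,d]$ are determined by their moments.

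Next, for each $x\in D$ and each $\varepsilon>0$ I would produce a probability measure $\rho_{x,\varepsilon}\in S$ with $W_1(\rho_{x,\varepsilon},\delta_x)\le 2\varepsilon$ in the $1$-Wasserstein distance on $[-d,d]$. Pick an invariant process $Y$ with $x\in\mathrm{supp}(\mu_Y)$, so $\mu_Y\big((x-\varepsilon/2,x+\varepsilon/2)\big)>0$; by Weierstrass, choose a polynomial $p$ uniformly approximating on $[-d,d]$ a continuous bump that equals $1$ on $(x-\varepsilon/2,x+\varepsilon/2)$ and vanishes off $(x-\varepsilon,x+\varepsilon)$; then $\int p^2\,d\mu_Y>0$, and the normalized measure $\rho_{x,\varepsilon}:=\big(\int p^2\,d\mu_Y\big)^{-1}p^2\,d\mu_Y$, which lies in $S$ by (b), puts a fraction of its mass outside $(x-\varepsilon,x+\varepsilon)$ that $\to0$ as the approximation is refined (compare with $\mu_Y\big((x-\varepsilon/2,x+\varepsilon/2)\big)$), giving the Wasserstein bound. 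Now let $\mu$ satisfy $\mathrm{supp}(\mu)\subseteq\mathrm{psp}(G)=\overline D$: for each $n$ partition $[-d,d]$ into intervals $I$ of length $1/n$, and for each $I$ with $\mu(I)>0$ pick $x_I\in D$ within $1/n$ of $\overline I$ (possible, since $\overline I$ meets $\mathrm{supp}(\mu)\subseteq\overline D$); then $\mu^{(n)}:=\sum_{I}\mu(I)\,\rho_{x_I,\varepsilon_n}\in S$ by (a), and $W_1(\mu^{(n)},\mu)\le\big(2/n+2\varepsilon_n\big)\mu([-d,d])$, so with $\varepsilon_n\to0$ we get $\mu^{(n)}\to\mu$ weakly and $\mu\in S$ by (c). This produces the desired invariant process with spectral measure $\mu$.

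The main obstacle is property (b): one must know that applying a polynomial of the adjacency operator to an invariant process multiplies the spectral measure by $p(t)^2$. The cleanest route is through the formalism of Section \ref{plancherel}, where the spectral measure of a process is literally the spectral measure of the self-adjoint adjacency operator of the associated graphing relative to a cyclic vector, so that the identity above is the elementary spectral-calculus fact for that single operator; the point needing care is only that this operator restricts functorially along the passage to the linear factor $p(A)Y$ (the subspace generated by $[p(A)Y]_o$ is invariant under both $A_{\mathcal G_Y}$ and the automorphism action). The remaining ingredients --- superposition, tightness of Gaussian realizations, and the fact that finite measures on the fixed compact interval $[-d,d]$ are determined by and continuous in their moments --- are standard.
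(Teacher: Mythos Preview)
Your proof is correct and follows essentially the same route as the paper's: the paper's proof of Theorem~\ref{tpsp} likewise builds approximate point masses by polynomial filtering of a process whose spectral measure charges a neighborhood of the target point, passes to finite sums via independent superposition, and closes under weak limits via exactly your property~(c), which is Lemma~\ref{lem:limit}(b). Your properties (a)--(c) and the approximate point mass construction match the paper's three stages (single atom, finitely many atoms, general measure) almost step for step; the only cosmetic difference is that you go straight from approximate point masses to a general $\mu$ in one weak-limit step, whereas the paper first takes the limit to get exact atoms and then approximates $\mu$ by finitely-supported measures.
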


For the $d$-regular tree it is known that the process spectrum is $[-d,d]$ \citep{arnaudregi, arnaud, lasser}, while the spectrum of the tree is $[-2\sqrt{d-1}, 2\sqrt{d-1}]$. In particular, Gaussian wave functions (see the definition below) exist for all $\lambda\in [-d, d]$ (Cs\'oka, Gerencs\'er, Harangi and Vir\'ag 2014, Fig\`a-Talamanca and Nebbia 1991). 

We will also prove the following statement about the connection of process spectrum and Kazhdan's property $(T)$ (see e.g.\ Bekka, de la Harpe and Valette 2008). This also shows that the behavior of process spectral radius and the behavior of spectral radius with respect to tensor product of graphs are different (take the product of two graphs, one with process spectral radius equal to $d$, and the other one less than $d$).

\begin{prop}\label{kazhdan} Let $H$ be a finitely generated infinite group, and $G$ its Cayley graph with some set of generators. Then the following are equivalent. 
\begin{enumerate}[(i)]
\item $H$ has Kazhdan's property $(T)$. 
\item The process spectral radius $\varrho_p^+(G)$ is less than $d$.
\end{enumerate}
\end{prop}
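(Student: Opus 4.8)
The plan is to translate the statement into one about unitary representations of $H$ and then to recognise it as the classical description of Kazhdan's property~$(T)$ through the spectral gap of the group Laplacian. Write $S$ for the symmetric generating set, $|S|=d$, and let $A=\sum_{s\in S}s$ be regarded as a self-adjoint element of the full group $C^*$-algebra $C^*(H)$; for a unitary representation $\pi$ of $H$ the adjacency operator ``in $\pi$'' is $\pi(A)=\sum_{s\in S}\pi(s)$. The first step is to record the dictionary that is already contained in Section~\ref{plancherel} and in the proof of Theorem~\ref{tpsp}: an invariant process $X$ on $G$ with mean $0$ and unit variance has covariance $c_X(v)=\langle\pi(v)\xi,\xi\rangle$ for the cyclic pair $(\pi,\xi)$ attached to $c_X$ by the GNS construction, so that $\mathbb E([A^kX]_o,X_o)=\langle\pi(A)^k\xi,\xi\rangle$ and $\mu_X$ is the spectral measure of $\pi(A)$ at $\xi$; conversely every such spectral measure is realised by a Gaussian process, after replacing $(\pi,\xi)$ by $(\pi\oplus\bar\pi,\tfrac1{\sqrt2}(\xi\oplus\bar\xi))$ if one wants the process to be real-valued. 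It follows that
\[\mathrm{psp}(G)=\overline{\bigcup_{\pi}\mathrm{spec}\big(\pi(A)\big)}=\mathrm{spec}_{C^*(H)}(A)\subseteq[-d,d],\]
the union running over all unitary representations of $H$. The trivial representation sends $A$ to $d$, so $d\in\mathrm{psp}(G)$ and $d=\max\mathrm{spec}_{C^*(H)}(A)$; hence $\varrho_p^+(G)<d$ is equivalent to $d$ being an isolated point of $\mathrm{spec}_{C^*(H)}(A)$, equivalently to $0$ being an isolated point of $\mathrm{spec}_{C^*(H)}(\Delta)$, where $\Delta=d-A=\sum_{s\in S}(1-s)$ is the Laplacian.

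The second step is the classical theorem (Bekka, de la Harpe and Valette 2008) that $H$ has property~$(T)$ if and only if $0$ is isolated in $\mathrm{spec}_{C^*(H)}(\Delta)$; combined with the first step this proves the proposition. For a self-contained treatment one uses the two identities $\langle\pi(\Delta)\xi,\xi\rangle=\tfrac12\sum_{s\in S}\|\pi(s)\xi-\xi\|^2$ for unit $\xi$ (valid because $S=S^{-1}$) and $\ker\pi(\Delta)$ equals the space of invariant vectors of $\pi$. If $(S,\varepsilon_0)$ is a Kazhdan pair, these give $\pi(\Delta)\ge\varepsilon_0^2/2$ on the orthocomplement of the invariant vectors of $\pi$, for every $\pi$, hence $\mathrm{spec}_{C^*(H)}(\Delta)\subseteq\{0\}\cup[\varepsilon_0^2/2,\infty)$; conversely a gap $(0,\delta)\cap\mathrm{spec}_{C^*(H)}(\Delta)=\emptyset$ forces $\pi(\Delta)\ge\delta$ on that orthocomplement, so $\max_{s\in S}\|\pi(s)\xi-\xi\|^2\ge 2\delta/d$ for every unit vector $\xi$ orthogonal to the invariant vectors of any $\pi$, i.e.\ $(S,\sqrt{2\delta/d})$ is a Kazhdan pair. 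Phrased directly through processes the negative direction reads: if $H$ fails $(T)$ there is a representation $\pi$ without nonzero invariant vectors and unit vectors $\xi_n$ with $\max_{s\in S}\|\pi(s)\xi_n-\xi_n\|\to0$, so $\langle\pi(A)\xi_n,\xi_n\rangle\to d$ while $d$ is not an eigenvalue of $\pi(A)$; thus the spectral measure of $\pi(A)$ at $\xi_n$ charges $(d-\varepsilon,d)$ for all large $n$, and realising these measures by processes yields $\varrho_p^+(G)\ge d-\varepsilon$ for every $\varepsilon>0$. The positive direction is the contrapositive computation: a support point of some $\mu_X$ in $(d-\varepsilon,d)$ produces, via a spectral subspace of $\pi(A)$ for a short interval around it avoiding $d$, a unit vector that is almost invariant and orthogonal to the invariant vectors, contradicting a Kazhdan pair once $\varepsilon$ is small.

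I do not expect a genuine obstacle here: the substantive content is the dictionary from the earlier sections, i.e.\ the identity $\mathrm{psp}(G)=\mathrm{spec}_{C^*(H)}(A)$ together with the back-and-forth between real invariant processes and (a priori complex) cyclic representations. The only points that need a little care are exactly these two, and once they are in place the proposition is precisely Kazhdan's spectral-gap criterion for property~$(T)$.
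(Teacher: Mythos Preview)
Your proof is correct and takes a genuinely different route from the paper's. The paper argues directly with processes and positive definite functions: for $(ii)\Rightarrow(i)$ it invokes the characterisation of property~$(T)$ by pointwise-to-uniform convergence of normalised positive definite functions, showing that when $\varrho_p^+(G)<d$ any sequence of covariance structures $c_n\to1$ pointwise has spectral measures concentrating at the isolated atom $d$, whence $c_n\to1$ uniformly; for $(i)\Rightarrow(ii)$ it uses Theorem~\ref{tpsp} to realise point masses at $a_n\nearrow d$ by processes, gets uniform convergence of their covariances to $1$ from $(T)$, and then derives a contradiction via the nonamenability of infinite Kazhdan groups. You instead set up the identity $\mathrm{psp}(G)=\mathrm{spec}_{C^*(H)}(A)$ through the GNS dictionary and reduce the statement to the standard spectral-gap criterion for $(T)$ in terms of the Laplacian in the full group $C^*$-algebra. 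The paper's argument stays within its own toolkit, recycling Theorem~\ref{tpsp} and Lemma~\ref{lem:limit}; yours is cleaner once the dictionary is in place, makes the representation-theoretic content explicit, and avoids the detour through nonamenability. The one genuinely delicate point you flag---matching complex cyclic representations with real invariant processes so that the spectral measure is preserved---is handled by your $\pi\oplus\bar\pi$ trick, and that is indeed all the extra bookkeeping required.
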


\subsubsection*{$\bar d_2$-distance and total variation distance of the spectral measures} During the proof of the main theorem, we will show the following inequality and orthogonality. 
The total variation distance of probability measures will be denoted by $d_{TV}(\cdot,\cdot)$. 
However, we will use the notion of total variation distance not only for probability measures: 
\[d_{TV}(\mu_1, \mu_2)=\frac{1}{2}\int |f-g|d\kappa,\] where $f$ and $g$ are the density functions of $\mu_1$ and $\mu_2$ with respect to some common dominating measure $\kappa$.

The Hellinger distance $d_H$ of measures $\mu, \nu$ is defined by
\begin{equation}d_H^2(\mu, \nu)=||\mu||+||\nu||-2\gamma,\label{eq:hell}\end{equation}
where the quantity $\gamma=\int \sqrt{fg} d\kappa$ is the Bhattacharyya coefficient.

\begin{prop}\label{all:dtv}Let $X, Y$ be invariant random processes. Suppose that $\mathbb E(X_o)=\mathbb E(Y_o)=0$ and the marginals of $X$ have finite second moments.  
\begin{enumerate}[(a)]\item The following inequality holds:
\begin{align*}\bar d_2^2(X, Y)&\geq d_H^2(\mu_X, \mu_Y)\\&\geq \mathbb E(X_o^2)+\mathbb E(Y_o^2) - \sqrt{\big(\mathbb E(X_o^2)+\mathbb E(Y_o^2)\big)^2-4d_{TV}^2(\mu_X, \mu_Y)}\\ &\geq \frac{2d_{TV}^2(\mu_X, \mu_Y)}{\mathbb E(X_o^2)+\mathbb E(Y_o^2)}.\end{align*}
\item Let $(X,Y)$ be an invariant pair of random processes. Suppose that $\mathbb E(X_o)=\mathbb E(Y_o)=0$ and the marginals of $X$ and $Y$ have finite second moments. If $\mu_X$ and $\mu_Y$ are singular measures, then 
\[\mathbb E(X_oY_o)=0.\]
\end{enumerate}
\end{prop}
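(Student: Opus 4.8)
The plan is to work entirely with the joint spectral picture of the pair $(X,Y)$. Associate to the invariant pair $(X,Y)$ a $2\times 2$ matrix-valued spectral measure $M$ on $\mathbb R$: for $k\ge 0$ the $(1,1)$-entry moments give $\mathbb E([A^kX]_o,X_o)$, the $(2,2)$-entry moments give $\mathbb E([A^kY]_o,Y_o)$, the off-diagonal moments give $\mathbb E([A^kX]_o,Y_o)=\mathbb E([A^kY]_o,X_o)$, so that $M$ is a positive semidefinite matrix measure with diagonal entries $\mu_X$ and $\mu_Y$ and with $\mathbb E(X_oY_o)$ equal to the total mass of the off-diagonal entry $\mu_{XY}$. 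Concretely one builds this from the graphing / Plancherel machinery of Section \ref{plancherel}: realize $X_o$ and $Y_o$ as elements of an $L^2$ space on which $A$ acts self-adjointly, and let $M$ be the joint spectral measure of this pair of vectors. Write $\kappa=\mu_X+\mu_Y$ as a common dominating measure and let $\begin{pmatrix} f & h\\ h & g\end{pmatrix}$ be the $\kappa$-densities; positive semidefiniteness of $M$ forces $h^2\le fg$ $\kappa$-almost everywhere, and $|h|$ is the $\kappa$-density of $|\mu_{XY}|$ in the sense of total variation.

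For part (b): if $\mu_X\perp\mu_Y$ then $fg=0$ $\kappa$-a.e., hence $h=0$ $\kappa$-a.e., hence $\mu_{XY}=0$, and in particular $\mathbb E(X_oY_o)=\mu_{XY}(\mathbb R)=0$. For part (a): first note that $\bar d_2^2(X,Y)=\min \mathbb E[(Y_o-Y'_o)^2]$ over invariant couplings equals $\mathbb E(X_o^2)+\mathbb E(Y_o^2)-2\sup \mathbb E(X_oY_o)$, where the supremum is over invariant pairs $(X,Y)$ with the prescribed marginals; so it suffices to bound $\mathbb E(X_oY_o)=\int h\,d\kappa$ from above by $d_{TV}(\mu_X,\mu_Y)$-controlled quantities given only that the diagonal is $(f,g)$. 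By Cauchy--Schwarz $\big(\int h\,d\kappa\big)^2\le \int fg\,d\kappa$, but that bound does not yet see $d_{TV}$; instead I would use the pointwise inequality $\sqrt{fg}\le \tfrac12(f+g)-(\sqrt f-\sqrt g)^2/2$ together with $|\sqrt f-\sqrt g|\ge |f-g|/(\sqrt f+\sqrt g)$, or more cleanly optimize directly: for fixed total masses $\mathbb E(X_o^2)=\int f$, $\mathbb E(Y_o^2)=\int g$ and fixed $d_{TV}=\tfrac12\int|f-g|$, maximize $\int\sqrt{fg}$ and show the extremal configuration yields exactly $\big(\mathbb E X_o^2+\mathbb E Y_o^2\big)^2-4d_{TV}^2=\big(\int(f+g)\big)^2-\big(\int|f-g|\big)^2=\big(\int 2\min(f,g)\big)\big(\int 2\max(f,g)\big)$, so $\int\sqrt{fg}\le \sqrt{\int\min(f,g)\cdot\int\max(f,g)}$ by Cauchy--Schwarz applied to $\sqrt{\min}$ and $\sqrt{\max}$; rearranging $\mathbb E(X_o^2)+\mathbb E(Y_o^2)-2\int\sqrt{fg}$ then gives the claimed lower bound for $\bar d_2^2$.

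I expect the main obstacle to be the first step: setting up the matrix-valued spectral measure $M$ for the pair and verifying that its off-diagonal total variation density is exactly $|h|$ with $h^2\le fg$. This requires care because $X$ and $Y$ need not be jointly Gaussian and the relevant Hilbert space is the one coming from the graphing, not a naive $\ell^2(V)$; one must check that $\langle A^k\delta_o,\cdot\rangle$-type identities extend from diagonal to off-diagonal entries and that $A$ really is self-adjoint in the joint picture. Once $M$ exists, part (b) is immediate and part (a) reduces to the elementary measure-theoretic optimization sketched above, whose only subtlety is checking the Cauchy--Schwarz step is tight enough — which it is, since equality is approached by measures where $f$ and $g$ are (up to scaling) indicators of disjoint-ish sets.
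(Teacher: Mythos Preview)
Your approach is correct and genuinely different from the paper's. The paper never introduces a matrix-valued spectral measure; instead it proves the key bound $|\mathbb E(X_oY_o)|\le \int\sqrt{fg}\,d\kappa$ (its Lemma~\ref{lem:xoyo}) by a bare-hands $\varepsilon$-argument: partition $[-d,d]$ into small intervals, approximate their indicators by polynomials $p_i$, and combine Cauchy--Schwarz on each piece with the isometry $\mathbb E[(p(A)X)_o^2]=\int p^2\,d\mu_X$. Your route replaces that entire computation by the single observation that the $2\times2$ spectral density of the pair is positive semidefinite, hence $|h|\le\sqrt{fg}$ pointwise; this is cleaner and more conceptual, at the cost of having to check that the graphing construction of Section~\ref{plancherel} extends to the $\mathbb R^2$-valued process $(X,Y)$ so that $X_o$ and $Y_o$ sit in a common $L^2$ on which $\mathcal G$ is self-adjoint (routine, but it should be said). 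For the final elementary step the paper (Lemma~\ref{delta1}) applies Cauchy--Schwarz to $(\sqrt f-\sqrt g)(\sqrt f+\sqrt g)=|f-g|$ and solves a quadratic in $q=\int(\sqrt f-\sqrt g)^2$, whereas you apply Cauchy--Schwarz to $\sqrt{\min(f,g)}\cdot\sqrt{\max(f,g)}=\sqrt{fg}$; these are equivalent rearrangements of the same inequality. One small slip in your write-up: the line ``by Cauchy--Schwarz $(\int h\,d\kappa)^2\le\int fg\,d\kappa$'' is neither Cauchy--Schwarz nor the step you need; the correct intermediate bound is $|\int h|\le\int|h|\le\int\sqrt{fg}$, which follows directly from $h^2\le fg$ and which you then use implicitly in the optimization.
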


\medskip 

\subsubsection*{Linear factor of i.i.d.\ processes and $\bar d_2$-metric} Proposition \ref{all:dtv} implies that Gaussian factor of i.i.d.\ processes are closed in the  $\bar d_2$-metric on the $d$-regular tree. 

\begin{corollary}\label{cor:lin} Let $G=T_d$ be the $d$-regular tree. 
Suppose that $X^{(n)}$ is a sequence of Gaussian factor of i.i.d.\ processes such that $X^{(n)}\rightarrow X$ with respect to the $\bar d_2$-distance as $n\rightarrow \infty$. Then (the distribution of) $X$ is a Gaussian factor of i.i.d.\ process.
\end{corollary}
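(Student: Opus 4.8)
The plan is to combine two facts: that $\bar d_2$-convergence preserves Gaussianity, and that on $T_d$ the law of a centred invariant Gaussian process is determined by its spectral measure; together with Theorem~\ref{tfiid}, which for the spectral measure of $X$ supplies a \emph{linear} (hence Gaussian) factor of i.i.d.\ process realizing it. For \textbf{Step 1 (the limit is a centred Gaussian process)}: for each $n$, Definition~\ref{def:dbar} provides an invariant coupling $(Y^{(n)},W^{(n)})$ with $Y^{(n)}\stackrel{d}{=}X^{(n)}$, $W^{(n)}\stackrel{d}{=}X$ and $\mathbb E[(Y^{(n)}_o-W^{(n)}_o)^2]\le \bar d_2^2(X^{(n)},X)+1/n\to 0$. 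For any finite $S\subseteq V(T_d)$, invariance gives $\mathbb E\big[\sum_{v\in S}(Y^{(n)}_v-W^{(n)}_v)^2\big]=|S|\,\mathbb E[(Y^{(n)}_o-W^{(n)}_o)^2]\to 0$, so $(X^{(n)}_v)_{v\in S}\stackrel{d}{=}(Y^{(n)}_v)_{v\in S}\to(X_v)_{v\in S}$ in $L^2$, hence in distribution. A distributional limit of Gaussian random vectors is Gaussian, so every finite-dimensional marginal of $X$ is Gaussian; and $|\mathbb E X_o|\le \mathbb E[(Y^{(n)}_o-W^{(n)}_o)^2]^{1/2}+|\mathbb E X^{(n)}_o|\to 0$ (the $X^{(n)}$ being centred), so $X$ is a centred Gaussian process, with $\mathbb E X_o^2<\infty$ by Definition~\ref{def:dbar}.

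\textbf{Step 2 (its spectral measure is realized linearly).} Since $X$ is a $\bar d_2$-limit of factor of i.i.d.\ processes, the equivalences $(iv)\Leftrightarrow(i)\Leftrightarrow(ii)$ of Theorem~\ref{tfiid} show $\mu_X\ll\nu$ and furnish a linear factor of i.i.d.\ process $Z$ with $\mu_Z=\mu_X$, whose i.i.d.\ base we may take to be standard Gaussian, so that $Z$ is a centred Gaussian factor of i.i.d.\ process. (Alternatively one may use Proposition~\ref{all:dtv}(a): the second moments being bounded along the sequence, it gives $d_{TV}(\mu_{X^{(n)}},\mu_X)\to 0$, and since each $\mu_{X^{(n)}}\ll\nu$ while $\{\mu:\mu\ll\nu\}$ is closed in total variation, $\mu_X\ll\nu$, after which Theorem~\ref{tfiid}$(i)\Rightarrow(ii)$ produces $Z$.)

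\textbf{Step 3 (matching laws on the tree).} Here I use that $G=T_d$: by the classical fact recalled in the introduction, on the $d$-regular tree the spectral measure of an invariant process is the (spherical) Fourier transform of its covariance structure $c_W(v)=\mathrm{cov}(W_o,W_v)$, so $\mu_W\mapsto c_W$ is injective — equivalently, the identities $\int t^k\,d\mu_W(t)=\langle A^k\delta_o,c_W\rangle$ for $k\ge0$ determine $c_W$ because $\{A^k\delta_o:k\ge 0\}$ spans the radial functions on $T_d$. Hence $\mu_Z=\mu_X$ forces $c_Z=c_X$. Now $X$ and $Z$ are centred invariant Gaussian processes on the vertex-transitive graph $T_d$, so for any finite $S$ the law of $(W_v)_{v\in S}$ is determined by $c_W$ together with $\mathrm{Aut}(T_d)$ (every pair of vertices being automorphism-equivalent to one containing the root); therefore $X\stackrel{d}{=}Z$, and $X$ is the distribution of a linear, in particular Gaussian, factor of i.i.d.\ process, as claimed.

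\textbf{Main obstacle.} Steps 1 and 2 are soft. The substance is Step 3, which also explains the restriction to $T_d$: on a general vertex-transitive graph the spectral measure need not determine the covariance structure (e.g.\ on $\mathbb Z^2$ the identity $\int t^k\,d\mu_W=\langle A^k\delta_o,c_W\rangle$ mixes vertices at equal graph-distance that are not automorphism-equivalent), so two Gaussian invariant processes with the same spectral measure may have different laws; only on the tree does this collapse, and it is precisely this collapse that lets Theorem~\ref{tfiid}'s linear model pin down the $\bar d_2$-limit.
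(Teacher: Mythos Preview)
Your proof is correct, but it is organized differently from the paper's. The paper first observes that every Gaussian factor of i.i.d.\ process on $T_d$ is in fact a \emph{spherical linear} factor of i.i.d.\ process (Gaussian processes are determined by their covariance, so Theorem~\ref{tfiid} gives a linear realization, and on $T_d$ linear $=$ spherical). It then invokes Proposition~\ref{all:sph}, whose proof constructs the $\bar d_2$-limit explicitly: total-variation convergence of $\mu_{X^{(n)}}$ forces $L^1$-convergence of the densities $\hat\alpha_n^2$, hence $L^2$-convergence of $\hat\alpha_n$, and the corresponding $\alpha\in\mathcal L$ defines a spherical linear process to which $X^{(n)}$ converges in $\bar d_2$. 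Gaussianity of the limit falls out for free.

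Your route instead proves Gaussianity of the limit directly from $\bar d_2$-convergence (Step~1), then uses Theorem~\ref{tfiid} as a black box to produce a linear model $Z$ with $\mu_Z=\mu_X$, and finally matches laws via the tree-specific fact that the moments $\langle A^k\delta_o,c_X\rangle$ determine the radial function $c_X$. This is slightly more elementary in that it bypasses the explicit isometry of Lemma~\ref{lem:gyok} and the construction in Proposition~\ref{all:sph}; on the other hand, the paper's route yields the stronger Proposition~\ref{all:sph} (closure for all spherical linear processes, not just Gaussian ones) along the way and is constructive, exhibiting the limiting rule $\alpha$ rather than merely an equidistributed process. Your identification of Step~3 as the tree-specific step is exactly right and matches the paper's remark that the general statement is for spherical linear processes.
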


In fact, we will prove this statement for arbitrary vertex-transitive graphs, but for a smaller class of linear factor of i.i.d.\ processes (see Definition \ref{def:sphlin} and Subsection \ref{closed}).

\medskip

\subsubsection*{Gaussian wave functions.}As another application of the inequality in Proposition \ref{all:dtv}, we will show  that Gaussian wave functions are separated from factor of i.i.d.\ processes with respect to the $\bar d_2$-distance. A Gaussian wave function with eigenvalue $\lambda$ is an invariant Gaussian process $(X_v)$ satisfying 
\[(AX)_v = \lambda X_v \text{ almost surely for all } v\in V(T_d).\]
According to Theorem 4 of \cite{V}, such a process exists for all $\lambda$ in the spectrum of the adjacency operator of $G$, but it is not a factor of i.i.d. process if $\lambda$ is the supremum of the spectrum. As for the $d$-regular tree $G=T_d$ (with $d\geq 3$),  Theorem 3 of \cite*{E} states that Gaussian wave function $(X_v)$ exists for all $\lambda\in [-d,d]$. Theorem 4 of the same paper says that $(X_v)$ is a weak limit of factor of i.i.d. processes if $\lambda$ is in the spectrum of the tree, i.e. in $[-2\sqrt{d-1},2\sqrt{d-1}]$, but it is known that it is not a factor of i.i.d.  See also Corollary 3.3 of \cite{lyons}, which gives a 2-valued example for a process which is weak limit of factor of i.i.d. but not factor of i.i.d.

Proposition \ref{all:dtv} will imply the following about wave functions. 
  \begin{corollary} \label{cor:dbar} Let $X$ be a Gaussian wave function corresponding to $\lambda\in [-d,d]$ (if it exists) with $\mathrm{Var}(X_o)=1$ and $\mathbb E(X_o)=0$.
\begin{enumerate}[(a)]
\item If $X'$ is a Gaussian wave function corresponding to $\lambda'\neq \lambda$ with $\mathbb E(X'_o)=0$ and $\mathrm{Var}(X'_o)=1$, then they are orthogonal in every coupling; equivalently,
$\bar d_2(X, X')=\sqrt 2$.

\item If the spectral measure of $G$ is absolutely continuous with respect to the Lebesgue measure, and $Y$ is a limit of factor of i.i.d.\ processes in the $\bar d_2$-distance with $\mathrm{Var}(Y_o)=1$, then they are orthogonal in every coupling; equivalently, $\bar d_2(X,Y)=\sqrt 2$. 

\end{enumerate}
\end{corollary}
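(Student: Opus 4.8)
The plan is to compute the spectral measures involved and then apply Proposition~\ref{all:dtv}. The key preliminary fact is that a Gaussian wave function $X$ with eigenvalue $\lambda$ and $\mathrm{Var}(X_o)=1$ has spectral measure $\mu_X=\delta_\lambda$. Indeed, $G$ is vertex-transitive, hence regular, so the adjacency operator acts on the process and $(AX)_v=\lambda X_v$ a.s.\ gives $A^kX=\lambda^kX$ a.s.\ for all $k\ge0$; the moment identity $\mathbb E\big([A^kX]_o X_o\big)=\int t^k\,d\mu_X(t)$ then gives $\int t^k\,d\mu_X(t)=\lambda^k\mathbb E(X_o^2)=\lambda^k$, so $\int(t-\lambda)^2\,d\mu_X(t)=0$, which forces $\mu_X$ to be concentrated on $\{\lambda\}$; since its total mass is $\mathrm{Var}(X_o)=1$, we get $\mu_X=\delta_\lambda$.

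For part~(a), the same computation gives $\mu_{X'}=\delta_{\lambda'}$, and as $\lambda\neq\lambda'$ these are mutually singular probability measures. Given any invariant coupling $(Y,Y')$ with $Y\stackrel{d}=X$ and $Y'\stackrel{d}=X'$, its marginal spectral measures remain $\delta_\lambda$ and $\delta_{\lambda'}$, so Proposition~\ref{all:dtv}(b) gives $\mathbb E(Y_oY'_o)=0$: orthogonality holds in every coupling. Hence $\mathbb E\big[(Y_o-Y'_o)^2\big]=\mathrm{Var}(Y_o)+\mathrm{Var}(Y'_o)=2$ for every invariant coupling, and the product coupling shows there is one, so $\bar d_2(X,X')=\sqrt2$. (Alternatively, $d_{TV}(\mu_X,\mu_{X'})=1$ and Proposition~\ref{all:dtv}(a) directly gives $\bar d_2^2(X,X')\ge 2-\sqrt{4-4}=2$.)

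For part~(b), since $Y$ is a $\bar d_2$-limit of factor of i.i.d.\ processes, the implication $(iv)\Rightarrow(i)$ of Theorem~\ref{tfiid} shows $\mu_Y$ is absolutely continuous with respect to $\nu$; as $\nu$ is absolutely continuous with respect to Lebesgue measure by hypothesis, $\mu_Y$ is absolutely continuous with respect to Lebesgue measure, hence atomless, so $\mu_Y(\{\lambda\})=0$. Therefore $\mu_X=\delta_\lambda$ and $\mu_Y$ are mutually singular, each of total mass $1$, and the argument of part~(a) applies verbatim with $X'$ replaced by $Y$: Proposition~\ref{all:dtv}(b) forces $\mathbb E(X_oY_o)=0$ in every invariant coupling, whence $\bar d_2(X,Y)=\sqrt2$.

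I do not anticipate a real obstacle: the proof is short once Proposition~\ref{all:dtv} and Theorem~\ref{tfiid} are available. The only points requiring care are the identification $\mu_X=\delta_\lambda$, which relies on the adjacency operator being well defined on the process (automatic since vertex-transitive graphs are regular), and checking that all couplings used are invariant so that Proposition~\ref{all:dtv} applies — and the product coupling of invariant processes is invariant, which also yields the upper bounds directly.
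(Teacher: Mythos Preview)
Your proposal is correct and follows essentially the same route as the paper: identify $\mu_X=\delta_\lambda$ via the moment identity, observe that $\delta_\lambda$ is singular to $\delta_{\lambda'}$ in case~(a) and to $\mu_Y$ (which is absolutely continuous with respect to Lebesgue measure by Theorem~\ref{tfiid}) in case~(b), and conclude via Proposition~\ref{all:dtv}(b). Your extra remarks about the product coupling and the alternative use of Proposition~\ref{all:dtv}(a) are fine but not needed.
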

 
 In particular, Gaussian wave functions are never factor of i.i.d.\ processes. Moreover, the metric space of  processes with the $\bar d_2$-distance is not separable.

\medskip 

\subsubsection*{Gauss Markov processes} As an application of Theorem \ref{tfiid}, we will show the following characterization of factor of i.i.d.\ Gauss Markov processes on $G=T_d$. These are Gaussian processes which have the spatial Markov property. The covariance structure is exponential in this case. That is, for every Markov process there exist $\varrho\in \mathbb R$ such that \begin{equation}\label{eq:gm}\mathrm{cov}(X_o, X_v)=\varrho^{|v|} \qquad (v\in V(T_d)),\end{equation}
where $|v|$ denotes the distance of $v$ from the root $o$.

\begin{prop}\label{prop:GM}A Gauss Markov process is factor of i.i.d. process if and only if 
\[|\varrho|\leq \frac{1}{\sqrt{d-1}} \quad \text{ holds, where $\varrho$ is defined by equation \eqref{eq:gm}}.\]
\end{prop}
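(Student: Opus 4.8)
The plan is to apply Theorem \ref{tfiid} by computing the spectral measure of a Gauss Markov process with parameter $\varrho$ and checking exactly when it is absolutely continuous with respect to the Kesten--McKay measure $\nu$, which is the spectral measure of $T_d$ and is supported on $[-2\sqrt{d-1},2\sqrt{d-1}]$ with a known density. First I would recall that on $T_d$ the spectral measure is the Fourier transform of the covariance structure, so I need the measure $\mu_\varrho$ on $\mathbb R$ whose moments reproduce $\mathbb E([A^kX]_o,X_o)$, equivalently whose "Fourier coefficients" against the spherical functions of $T_d$ give $\varrho^{|v|}$. Summing the geometric-type series $\sum_v \varrho^{|v|}\phi_v$ (with $\phi_v$ the spherical function and using that the sphere of radius $n$ has size $d(d-1)^{n-1}$), one obtains a closed form for the density of $\mu_\varrho$: it is an explicit rational function of $t$ times the density of $\nu$, i.e. $d\mu_\varrho(t) = g_\varrho(t)\,d\nu(t)$ on $[-2\sqrt{d-1},2\sqrt{d-1}]$, plus possibly atoms at the two points where the denominator vanishes outside the spectrum.

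The key dichotomy is then: when $|\varrho|\le 1/\sqrt{d-1}$, the rational factor $g_\varrho$ has no poles in $[-2\sqrt{d-1},2\sqrt{d-1}]$ and there are no atoms outside, so $\mu_\varrho \ll \nu$; when $|\varrho| > 1/\sqrt{d-1}$, the Markov/positive-definiteness computation forces an atom (a point mass) at $\pm$ something in $(2\sqrt{d-1},d]$ outside $\mathrm{supp}(\nu)$ — concretely the covariance $\varrho^{|v|}$ then fails to be $\ell^2$-summable against the graph in the way needed, and the spectral measure picks up mass at $\lambda = (d-1)\varrho + 1/\varrho$ or similar, which lies strictly outside $[-2\sqrt{d-1},2\sqrt{d-1}]$ precisely when $|\varrho|>1/\sqrt{d-1}$. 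Such a measure is not absolutely continuous with respect to $\nu$, so by Theorem \ref{tfiid} the process is not factor of i.i.d. Conversely, for $|\varrho|\le 1/\sqrt{d-1}$ absolute continuity holds and Theorem \ref{tfiid}(i)$\Rightarrow$(iii) produces a factor of i.i.d.\ process with that spectral measure; since a Gaussian process is determined by its covariance (equivalently its spectral measure), that process has the same distribution as the given Gauss Markov process, completing the equivalence.

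The main obstacle I expect is the explicit spectral computation: correctly summing the series $\sum_n \varrho^n (d-1)^{n-1} \cdot(\text{spherical function at radius }n)$ and identifying the resulting rational function, including pinning down the threshold $1/\sqrt{d-1}$ as exactly the radius of convergence / pole-location condition, and verifying that at the boundary $|\varrho|=1/\sqrt{d-1}$ the measure is still absolutely continuous (the potential pole lands on the edge of the spectrum and is integrable against $\nu$, whose density vanishes like a square root there). One should also handle the degenerate case $\varrho=0$ (white noise, trivially factor of i.i.d.) and the sign of $\varrho$ symmetrically. Everything else is a direct invocation of Theorem \ref{tfiid} together with the fact that Gaussian processes with equal covariance structure are equal in distribution.
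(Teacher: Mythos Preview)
Your plan for the ``if'' direction is exactly the paper's: write $c_\varrho=\sum_{k\ge 0}\varrho^k r_k(A)\delta_o$ in terms of the Dunau polynomials, sum the generating function to get the density
\[
f_\varrho(x)=\frac{1}{1+(d-1)\varrho^2-\varrho x},
\]
observe that for $|\varrho|<1/\sqrt{d-1}$ the pole $x=(d-1)\varrho+1/\varrho$ lies outside $[-2\sqrt{d-1},2\sqrt{d-1}]$ so $f_\varrho\in L^1(\nu)$, and at the threshold $|\varrho|=1/\sqrt{d-1}$ the pole hits the spectral edge but is integrable because the Kesten--McKay density vanishes like a square root there. Then invoke Theorem~\ref{tfiid} and the fact that a Gaussian process is determined by its covariance. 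All of this matches the paper.

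The ``only if'' direction is where you and the paper part ways. The paper does not compute the spectral measure for $|\varrho|>1/\sqrt{d-1}$ at all; it simply cites the uniform exponential correlation bound of Backhausz--Szegedy--Vir\'ag (the reference \cite{decay}), which says any factor of i.i.d.\ correlation decays at least like $(d-1)^{-k/2}$, immediately ruling out $|\varrho|>1/\sqrt{d-1}$. Your route---show that for $|\varrho|>1/\sqrt{d-1}$ the spectral measure of the Gauss Markov process carries an atom at $\lambda=(d-1)\varrho+1/\varrho\in(2\sqrt{d-1},d]$ and then apply Theorem~\ref{tfiid}---is also correct and has the virtue of being self-contained within this paper. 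The cost is that you actually have to establish the atom: the generating-function identity for $f_\varrho$ is only valid when the series $\sum_k\varrho^k r_k(x)$ converges in $L^2(\nu)$, i.e.\ for $|\varrho|\le 1/\sqrt{d-1}$, so you cannot simply read off the spectral measure from $f_\varrho$ beyond the threshold. One clean way to finish your argument is to verify directly that $c_\varrho-w\,\varrho_\lambda^{|\cdot|}$ (for the right weight $w$ and with $\lambda=(d-1)\varrho+1/\varrho$) has an absolutely continuous spectral measure, or equivalently to check the moment identities $\int t^k\,d\mu_\varrho=\langle A^k\delta_o,c_\varrho\rangle$ for the candidate measure ``density $f_\varrho\,d\nu$ plus atom at $\lambda$''. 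This is classical (it is the spherical representation of positive-definite radial functions on $T_d$), but it is a genuine extra computation that your outline glosses over with ``or similar''.
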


\medskip

\subsubsection*{Outline} The paper is built up as follows. In Section \ref{prel}, we recall the concept of a factor of i.i.d.\ process and spectral measures. The proofs of Proposition \ref{all:dtv}, Theorem \ref{tfiid} and Theorem \ref{tlimfiid} may be found in Sections \ref{sec:allitas}, \ref{proof2},  and \ref{proof3} respectively. The last section contains the proofs of the applications (process spectrum, $\bar d_2$-limits of linear factor of i.i.d.\ processes, Gaussian wave functions, Gaussian free field, Gauss Markov processes).

\section{Preliminaries}\label{prel}

First we recall the definition of factor of i.i.d.\ processes for vertex-transitive graphs. This section is based on \cite{V}.

\subsection{Invariant random processes and Gaussian processes.} 
Let $G$ be a rooted graph with countable vertex set $V(G)$, edge set $E(G)$ and root $o\in V(G)$. 
We consider random processes indexed by the vertices of the graph. That is, we 
assign a random variable $X_v$ to each $v\in V(G)$. 

We say that a bijection $\Phi: V(G)\rightarrow V(G)$ is an {\it automorphism} of $G$ if 
for  all vertices $u$ and $v$ the following holds: 
$(\Phi(u), \Phi (v))\in E(G)$ if and only if $(u,v)\in E(G)$. The group of 
automorphisms of $G$ is denoted by $\text{Aut}(G)$. The graph is {\it vertex 
transitive} if $\text{Aut}(G)$ acts transitively on $V(G)$. 

The collection of random variables $(X_v), v\in V(G)$ is an {\it invariant random 
process} on $G$ if for any $\Phi\in \text{Aut}(G)$ the joint distribution of 
$(X_{\Phi(v)})$ is the same as the joint distribution of $(X_v)$.

 We say that a 
collection of random variables $(X_v), v\in V(G)$ is a {\it Gaussian process}  if their joint distribution is 
Gaussian (i.e.\ any finite linear combination of them has Gaussian distribution) 
and they all have mean 0. An {\it invariant Gaussian process} is a 
Gaussian process that is invariant.

\subsection{Factor of i.i.d.\ processes.}
 
 \label{fiid}

Let $G$ be a vertex transitive graph and $\Omega=\mathbb R^{V(G)}$. Let $P$ be the product measure on $\Omega$ obtained from the 
standard normal distribution on $\mathbb R$. That is, if we assign independent standard normal 
random variables $Z_v$ to the vertices $v\in V(G)$, then the collection of random variables $(Z_v)$ is a 
random element of the measure space $(\Omega, P)$.  

A {\it factor of i.i.d.\ process} on $G$ will be determined by a function $f\in L^2(\Omega, P)$ which is 
invariant under the root-preserving automorphisms of $G$. Given the independent standard normal 
random variables $(Z_u)_{u\in V(G)}$, we assign to each vertex $v\in V(G)$ the value of 
$f$ on $(Z_{\Phi(u)})_{u\in V(G)}$ where $\Phi$ is in ${\rm Aut}(G)$ taking $v$ to the root. Notice that this does not depend on the choice of $\Phi$, because $f$ is invariant under the root-preserving automorphisms of $G$. We get an invariant random process $(X_v)_{v\in V(G)}$ this way. 
 In addition, the only possibility to define the process such that each $X_u$ is determined  from $(Z_v)$ by a 
measurable function and they are ${\rm Aut}(G)$-equivariant (they commute with the natural action of ${\rm Aut}(G)$) is to evaluate a measurable function $f$ at each vertex. 
See Section 3.2. of \cite{V} for the details. On the other hand, 
since we will deal 
with the covariance structure of the process, we need $f$ to be in $L^2(\Omega, P)$.
 
\subsubsection*{Linear factor of i.i.d.\ processes.} \label{lin} 
Linear factor of i.i.d. processes will be given by $\ell^2$-functions on the vertex set of $G$.   More precisely, first we define the $\ell^2$-space of the graph by
\[\ell^2(G)=\bigg\lbrace \alpha: V(G)\rightarrow \mathbb R \bigg\vert \sum_{v\in V(G)} \alpha(v)^2<\infty \bigg\rbrace.\]   
 In this space we have the inner product as usual:
\[\langle \alpha, \beta\rangle_G=\sum_{v\in V(G)} \alpha(v)\beta(v)\qquad (\alpha, \beta \in \ell^2(G)).\]

\begin{df}[Linear factor of i.i.d.\ process] \label{def:linfiid}We say that $X$ is a linear factor of i.i.d.\ process if its rule $f$ is given by 
\[f(\omega)= \sum_{u\in V(G)} \beta(u) \omega_u \qquad ( \omega\in \Omega),\]
where $\beta\in \ell^2(G)$ and it is invariant under the root-preserving automorphisms of $G$ (that is, $\beta(u)=\beta(\Phi(u))$ for all $\Phi\in \mathrm{Aut}(G)$ with $\Phi(o)=o$).
\end{df}

 By Kolmogorov's three-series theorem the sum in the definition is convergent almost surely if and only if $\sum_{u\in V(G)} \beta^2(u)<\infty$. 
See Proposition 3.3. of \cite{V} for more details.

\subsubsection*{Spherical linear factor of i.i.d.\ processes.}
We will also consider a special class of linear factor of i.i.d. processes. In this case the coefficients belong to the $\ell^2$-closure of the polynomials of the adjacency operator of $G$. Let the adjacency operator $A: \ell^2(G)\rightarrow \ell^2(G)$  defined by 
\[(A\beta)(v)= \sum_{(u,v)\in E(G)} \beta(u) \qquad (v\in G,\ \beta\in \ell^2(G)).\]  
We denote by $\delta_o\in \ell^2(G)$ the indicator function of the root. For each polynomial $p$ the function $p(A)\delta_o$ is a finitely supported function on $V(G)$, hence it is in $\ell^2(G)$.  Let 
\[\mathcal L=\overline{\big\{p(A)\delta_o: p \text{ is a polynomial}\big\}}\subseteq \ell^2(G),\]
that is, the $\ell^2$-closure of the functions given by the polynomials of $A$. 

\begin{df}[Spherical linear factor of i.i.d.] \label{def:sphlin}We say that $X$ is a spherical linear factor of i.i.d.\ process if its rule $f$ is given by 
\[f(\omega)= \sum_{u\in V(G)} \beta(u) \omega_u \qquad ( \omega\in \Omega),\]
where $\beta\in \mathcal L$.
\end{df}

Note that for the regular tree $G=T_d$ every finitely supported radial function on $V(G)$ (i.e., the value depends only on the distance from the root) is in the form $p(A)\delta_o$ for some polynomial $p$. Hence $\mathcal L=\ell^2(T_d)$, and every linear factor of i.i.d.\ process is spherical.

\subsubsection*{Linear factors}In the sequel, we will also consider finite linear factors of any invariant process $X$. The definition is based on polynomials of $A$, as follows.  

\begin{df}[Linear factor of a process $X$]\label{def:polylin}
Let $X$ be an invariant random process and $p$ a polynomial. By $p(A)X$ we mean the linear factor process defined by
\[[p(A)X]_v=\sum_{w\in V(G)} \big (p(A)\delta_o\big)(\Phi(w)) X_w, \]
where $\Phi\in \mathrm {Aut}(G)$ is any automorphism taking $v$ to the root $o$.
\end{df}
Since $p(A)\delta_o$ is a fixed point of root-preserving automorphisms of $G$, the definition does not depend on the choice of $\Phi$.

\subsection{Spectral measures.}  \label{plancherel} 

\subsubsection*{Spectral measure of the graph} Let $G$ be a vertex-transitive graph with all vertices having degree $d$. Recall that $\delta_o\in \ell^2(G)$ is the function that is equal to 1 at the root and 0 everywhere else. The operator $A$ is a bounded self-adjoint operator. Therefore there exists a finite measure $\nu$ on $[-d, d]$ such that 
\begin{equation}\label{eq:Ak}
\langle A^k \delta_o, \delta_o\rangle_G =\int t^k d\nu(t)
\end{equation}
holds for every $k\geq 0$. This is called the {\it spectral measure of the graph at the root} (see e.g.\ Bordenave, Sen and Vir\'ag (2013), Ab\'ert, Thom and Vir\'ag (2014) and the references therein). Notice that this is closely related to the return probabilities of the random walk on the graph: the left hand side is the number of returning paths of length $k$ starting from the root.

When $G=T_d$ is the infinite $d$-regular tree ($d\geq 3$), then $\nu$  is 
the Plancherel (or Kesten--McKay) measure\ \cite[see e.g.][]{woess}, which has density  function
\[h(t)=\begin{cases}\frac{d}{2\pi} \frac{\sqrt{4(d-1)-t^2}}{d^2-t^2}  &\qquad t\in\Big[-2\sqrt{d-1}, 2\sqrt{d-1}\Big];\\ \qquad h(t)=0  &\qquad \text{otherwise} \end{cases}\]
 with respect to the Lebesgue measure. Here $\varrho=2\sqrt{d-1}$ is the spectral radius of the tree.

 \subsubsection*{Spectral measure of invariant processes.}
 
 The spectral measure of invariant random processes will be the spectral measures of the graphings (as bounded self-adjoint operators) associated to them (see also \cite{decay}). This concept is as follows.

   Let $X$ be an invariant random 
process with marginals having finite variance. We modify  the definition of $\Omega=\mathbb R^{V(G)}$ a bit. Namely, we 
identify two elements if one can be obtained from the other by applying a root-preserving 
automorphism of $G$. We get $\tilde\Omega$ this way. The distribution of 
$X$ is a probability measure $Q$ on $\tilde\Omega$.

We  construct a bounded degree graph from $\tilde \Omega$ by connecting two elements if and only if one can be obtained from the other one by moving the root to one of its neighbors. Let $\mathcal E$ be the edge set of this graph. 

\begin{remark}
The probability space $(\tilde \Omega, Q)$ with edge set $\mathcal E$  will form a graphing.  See 
\citet*[][\!\!, Section 3]{HLSz} or \citet*[][\!\!, Section 2]{decay} for the definition and details. 
\end{remark}

We define the following operator 
$\mathcal G$ acting on $L^2(\tilde\Omega, Q)$: 
\[(\mathcal Gf)(\omega)=\sum_{(\omega,\omega')\in \mathcal E} f(\omega') \qquad (f \in L^2(\tilde\Omega, Q),\ \omega\in \tilde \Omega). \]
Notice that $\mathcal G$ is a bounded self-adjoint operator.    Let us denote by $e_o\in L^2(\tilde\Omega, Q)$ the function that 
assigns to each element of $\tilde\Omega$ its value at the root. 

According to the spectral theorem, the operator $\mathcal G$ has a (finite) spectral measure $\mu_X$ at $e_o$. That is,  $\mu_X$ is a finite measure on $\mathbb R$ satisfying the following: 
\begin{equation*}
\langle \mathcal G^k e_o, e_o\rangle_Q=\int t^k d\mu_X(t)  \qquad (k\geq 0),
\end{equation*} 
where $\langle \cdot, \cdot \rangle_Q$ is the scalar product in $L^2(\tilde\Omega, Q)$. Since the degrees in $G$  are bounded by $d$, the same holds for the graph constructed above. This implies that the largest eigenvalue (in absolute value) is equal to $d$, and the support of $\mu_X$ is contained in 
the interval  $[-d, d]$. 

\begin{df}[Spectral measure of a process] The spectral measure of an invariant random process $X$ (with marginals having finite second moments)  is the spectral measure of $\mathcal G$ at $e_o\in L^2(\tilde\Omega, Q)$, which is a finite measure on $[-d, d]$. It will be denoted by $\mu_X$.
\end{df}

 In the sequel, we will use the notation 
 \[\langle X, Y\rangle=\mathbb E(X_o \cdot Y_o)\]
 for any invariant pair of random processes $(X, Y)$ on $G$ which have marginals with finite second moments.
   
   Notice that  $X$ is a random element of $\tilde \Omega$ with distribution $Q$, the function $e_o$ selects the value at the root, and the action of $\mathcal G$ corresponds to the linear factor process given by $A$ (recall Definition \ref{def:polylin}).  We obtain 
\[\langle \mathcal G^k e_o, e_o\rangle_Q=\mathbb E ([A^k X]_o \cdot X_o)=\langle A^k X, X\rangle.\]

  We conclude that  for every invariant process $X$ its spectral measure $\mu_X$ satisfies 
\begin{equation}\label{eq:sp}
\langle A^k X, X\rangle = \int t^k d\mu_X(t) \qquad (k\geq 0), \textrm{ and}
\end{equation}
\begin{equation}\label{eq:adj}
A \textrm{ is self-adjoint: } \langle A^k X, A^l X\rangle=\langle A^{k+l} X, X \rangle \qquad (k, l\geq 0).
\end{equation}

 \subsubsection*{Covariance structure} We will define the covariance structure of a process, and we will see how it is related to the spectral measure if the marginals of the process have mean 0. 
 
 \begin{df}[Covariance structure] Let $X$ be an invariant random process with marginals having finite second moments. Its covariance structure,  $c_{X}: V(G)\rightarrow \mathbb R$ is defined by
\[c_{X}(v)=\mathrm{cov}(X_o, X_v) \qquad (v\in V(G)).\] 
\end{df}

Suppose that $\mathbb E(X_o)=0$. 
The scalar product $\langle A^k X, X\rangle$ is the same as the following: take $X$, start a random walk of length $k$ from the root, calculate the covariance of the values of $X$ at the root and at the endpoint of the random walk, and multiply this by $d^k$. On the other hand, by linearity of expectation,  we obtain 
\[\langle A^k X, X\rangle=\langle A^k\delta_o, c_X\rangle_G \qquad (k\geq 0). \] 
Notice that this is a finite sum. By equation \eqref{eq:sp} we get
 \begin{equation}\label{eq:cov}\langle A^k\delta_o, c_X\rangle_G = \int t^k d\mu_X(t) \qquad (k\geq 0),\end{equation} 
 which shows the connection between the covariance structure and the spectral measure of the process.

  Notice that if $X$ is a process that assigns independent random variables to the vertices of $G$, then $c_X=\delta_o$ and $\mu_X=\nu$; that is, the spectral measure of the process is the spectral measure of the graph.

\subsection{Limits of random processes.}

The $\bar d_2$-distance of invariant processes (Definition \ref{def:dbar}) induces the notion of $\bar d_2$-limits of factor of i.i.d.\ processes.

We will also deal with distributional convergence. That is, 
a sequence of invariant random processes converges if the sequence of their distributions 
on $\Omega=\mathbb R^{V(G)}$ converges weakly to some probability measure on $\Omega$. A weak limit of 
factor of i.i.d.\ process is an invariant process that can be approximated with factor of i.i.d.\ processes 
in distribution. Notice that $\bar d_2$-convergence implies convergence in distribution.

\section{Proof of Proposition \ref{all:dtv}}

\label{sec:allitas}

Before proving Proposition \ref{all:dtv}, we need a lemma. Recall that $p(A)X$ denotes the linear factor process of $X$ whose coefficients are given by the finitely supported function $p(A)\delta_o$ (Definition \ref{def:polylin}).

\begin{lemma}[Isometry]\label{lem:efx2}
Let $X$ be a random invariant process and $p$ a  polynomial. Assume that $\mathbb E(X_o)=0$ and the marginals of $X$ have finite second moments.  
Then we have
\[\mathbb E \big[[p(A)X]_o^2\big]=\int  p^2 d\mu_X .\]
\end{lemma}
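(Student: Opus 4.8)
The plan is to expand the square $[p(A)X]_o^2$ in the monomial basis and reduce everything to the moment identity~\eqref{eq:sp} together with the self-adjointness relation~\eqref{eq:adj}. Write $p(t)=\sum_{k=0}^m a_k t^k$. Then by linearity of the linear-factor construction (Definition~\ref{def:polylin}), $[p(A)X]_o=\sum_{k=0}^m a_k [A^k X]_o$, so
\[
\mathbb E\big[[p(A)X]_o^2\big]=\sum_{k=0}^m\sum_{l=0}^m a_k a_l\,\mathbb E\big([A^kX]_o\,[A^lX]_o\big)
=\sum_{k,l} a_k a_l\,\langle A^kX, A^lX\rangle .
\]
Here I use the notation $\langle X,Y\rangle=\mathbb E(X_oY_o)$ introduced in the text; note that since $\mathbb E(X_o)=0$ and the marginals have finite second moments, each $[A^kX]_o$ is a finite linear combination of the $X_v$ and hence square-integrable, so all these expectations are finite and the interchange of the (finite) sum with expectation is legitimate.

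Next I would apply~\eqref{eq:adj} to collapse the double sum: $\langle A^kX, A^lX\rangle=\langle A^{k+l}X, X\rangle$, so
\[
\mathbb E\big[[p(A)X]_o^2\big]=\sum_{k,l} a_k a_l\,\langle A^{k+l}X, X\rangle .
\]
Then~\eqref{eq:sp} rewrites each term as $\langle A^{k+l}X,X\rangle=\int t^{k+l}\,d\mu_X(t)$, and pulling the finite sum back inside the integral gives
\[
\mathbb E\big[[p(A)X]_o^2\big]=\int \Big(\sum_{k,l} a_k a_l\, t^{k+l}\Big)\,d\mu_X(t)=\int \Big(\sum_k a_k t^k\Big)^2 d\mu_X(t)=\int p^2\,d\mu_X,
\]
which is the claim.

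There is essentially no hard step here; the only points requiring a word of justification are (i) that $[A^kX]_o\in L^2$ so that the cross-moments $\langle A^kX,A^lX\rangle$ exist — this follows because $A^k\delta_o$ is finitely supported, so $[A^kX]_o$ is a finite sum of the $X_v$, each in $L^2$ by hypothesis — and (ii) the validity of~\eqref{eq:adj} and~\eqref{eq:sp}, which are already established in Subsection~\ref{plancherel}. If one wants to be careful about the mean-zero hypothesis, it enters precisely in making $\langle A^kX,X\rangle$ equal to the covariance-based expression and in identifying $\mu_X$ via~\eqref{eq:sp}; with $\mathbb E(X_o)=0$ there is nothing more to check. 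So the whole argument is a direct expansion, and the "main obstacle" is merely bookkeeping to make sure the finite sums and the integral can be exchanged, which is immediate.
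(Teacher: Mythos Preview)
Your proof is correct and follows exactly the same approach as the paper: both rely on the spectral identity~\eqref{eq:sp} and the self-adjointness relation~\eqref{eq:adj} to pass from $\langle p(A)X,p(A)X\rangle$ to $\int p^2\,d\mu_X$. The paper simply compresses your monomial expansion into the single line $\int p^2\,d\mu_X=\langle p^2(A)X,X\rangle=\langle p(A)X,p(A)X\rangle=\mathbb E\big[[p(A)X]_o^2\big]$, whereas you spell out the sum over $k,l$ explicitly.
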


\begin{proof}
Using the definition of the spectral measure $\mu_X$ (equation \eqref{eq:sp}) and the fact that $A$ is self-adjoint by equation \eqref{eq:adj}, we get 
\[\int  p^2 d\mu_X= \langle p^2(A)X, X\rangle=\langle p(A)X, p(A)X\rangle=\mathbb E \big[[p(A)X]_o^2\big].\qedhere\]
\end{proof}

Notice that by the inequality of arithmetic and geometric means, the Bhattacharyya coefficient can be expressed as 
\[2\gamma=\inf_{\substack{h> 0\\ h, 1/h \text{ integrable}}} \int h d\mu +\int \frac{1}{h} d\nu;\]
or, equivalently, using polynomials: 
\begin{equation}2\gamma=\lim_{\varepsilon \downarrow 0} \inf_{ \substack{p,q \text{ polynomials}\\ |pq-1|<\varepsilon \text{ on } \mathrm{supp}(\kappa)}} \bigg\{\int p^2 d\mu +\int q^2 d\nu \bigg\}.\label{hell}\end{equation}

\begin{proof}[{\bf Proof of Proposition  \ref{all:dtv}.}] 

Take any invariant coupling of $X, Y$. Let $\kappa=\mu_X+\mu_Y$, and $d\mu_X=fd\kappa$, $d\mu_Y=gd\kappa$.
By Lemma \ref{lem:efx2} and equation \eqref{eq:hell}, we need to show $|\mathbb E(X_oY_o)|\leq \gamma$. For polynomials $p, q$ as in equation \eqref{hell}, we have 
\begin{align*}\mathbb E\big((p(A)X)^2_o\big)&+\mathbb E\big((q(A)Y)^2_o\big)\geq 2\big|\mathbb E(p(A)X_o\cdot q(A)Y_o)\big|\\&=2\big|\mathbb E(q(A)p(A)X_o\cdot Y_o)\big|=2\big|\mathbb E(X_oY_o)\big|+r, \end{align*}
where $|r|\leq \varepsilon \sqrt{\mathbb E(X_o^2)\mathbb E(Y_o^2)}$ holds for the error. We conclude by letting $\varepsilon\rightarrow 0$. 

The second inequality is standard, see e.g.\ Lemma 12.2.\ in \cite{hell}. We include a proof for completeness.
We define 
\[\Delta_1=\int |f-g|d\kappa; \qquad s=\int (f+g) d\kappa. \]
By the Cauchy--Schwarz inequality, we have 
\[\int (\sqrt f- \sqrt g)^2d\kappa \int (\sqrt f+ \sqrt g)^2d\kappa \geq \bigg [\int |f-g| d\kappa \bigg]^2=\Delta_1^2.\]
On the other hand, the definition of $s$ implies that
\[\int (\sqrt f+ \sqrt g)^2d\kappa=\int f+g+2\sqrt{fg}d\kappa=2s-\int (\sqrt f- \sqrt g)^2d\kappa.\] 
Hence for $q=\int (\sqrt f- \sqrt g)^2d\kappa$ we obtain $q(2s-q)\geq \Delta_1^2$.
It follows that 
$q\geq s-\sqrt{s^2-\Delta_1^2}$.

The last inequality is straightforward.

As for $(b)$, singularity of $\mu_X$ and $\mu_Y$ implies that $\int \sqrt{fg} d\eta=0$. Hence the statement follows from the  above inequality $|\mathbb E(X_oY_o)|\leq \gamma$.
\end{proof}

\section{Proof of Theorem \ref{tfiid}}

\label{proof2}

\begin{lemma}[Spectral measures of spherical linear functions] \label{lem:gyok}
There exists an isometry between $\mathcal L$ and $L^2(\mathbb R, \nu)$ such that the following hold.
\begin{enumerate}[(a)] 
\item For all 
$\alpha \in\mathcal L$ and its image $\hat\alpha$ we have 
\[\langle\alpha, \alpha\rangle_G=\int \hat \alpha^2 (t)d\nu(t).\]
\item Let $\alpha\in \mathcal L$. Then the spectral measure of the spherical linear factor of i.i.d.\ process given by $\alpha$ has density $\hat \alpha^2$ with respect to $\nu$.
\item Let $h\in L^2(\mathbb R, \nu)$ be a measurable function. Then there exists a spherical linear factor of i.i.d.\ process $X$ such that $\mu_X$ is the measure with density function $h^2$ with respect to $\nu$.
\end{enumerate}
\end{lemma}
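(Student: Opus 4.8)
The plan is to realise the claimed isometry as the standard spectral representation of the cyclic subspace generated by $\delta_o$ under the adjacency operator $A$, and then to read off $(a)$--$(c)$ from that representation together with Lemma~\ref{lem:efx2}.

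\emph{Step 1: the isometry.} I would first define a map $\Theta$ on the dense subspace $\{p(A)\delta_o : p \text{ a polynomial}\}$ of $\mathcal L$ by $\Theta(p(A)\delta_o)=p$, regarded as an element of $L^2(\mathbb R,\nu)$. Since $A$ is self-adjoint and $\nu$ is the spectral measure of $G$ (equation~\eqref{eq:Ak}), one has, for real polynomials $p,q$,
\[\langle p(A)\delta_o,\,q(A)\delta_o\rangle_G=\langle (pq)(A)\delta_o,\,\delta_o\rangle_G=\int pq\,d\nu=\langle p,q\rangle_{L^2(\nu)}.\]
This shows $\Theta$ is well defined (if $p(A)\delta_o=q(A)\delta_o$ then $p=q$ in $L^2(\nu)$) and isometric. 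Because $\nu$ is a finite measure supported on the compact interval $[-d,d]$, the Weierstrass approximation theorem gives that polynomials are dense in $C([-d,d])$, hence in $L^2(\mathbb R,\nu)$; therefore $\Theta$ extends to a surjective linear isometry $\mathcal L\to L^2(\mathbb R,\nu)$, which I write $\alpha\mapsto\hat\alpha$. By construction it intertwines $A|_{\mathcal L}$ with multiplication by $t$ on $L^2(\mathbb R,\nu)$, since $\Theta(A\,p(A)\delta_o)=t\,p$ and $A$ is bounded, so this passes to the closure. Part $(a)$ is then immediate from the isometry property.

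\emph{Step 2: the spectral measure, part $(b)$.} Let $X$ be the spherical linear factor of i.i.d.\ process given by $\alpha\in\mathcal L$. For $\alpha=p(A)\delta_o$ one has $X=p(A)Z$, where $Z$ is the i.i.d.\ process (spectral measure $\nu$), and equations~\eqref{eq:sp}--\eqref{eq:adj} applied to $Z$ give $\int t^k\,d\mu_X=\langle A^k p^2(A)Z,Z\rangle=\int t^k p^2\,d\nu$ for all $k\ge 0$, so $\mu_X$ has density $p^2=\hat\alpha^2$ with respect to $\nu$. For general $\alpha\in\mathcal L$ I would pick polynomials $p_n$ with $p_n(A)\delta_o\to\alpha$ in $\ell^2(G)$ and set $X^{(n)}=p_n(A)Z$; then $(X^{(n)},X)$ is jointly invariant, $X^{(n)}-X$ is a mean-zero invariant process, and Lemma~\ref{lem:efx2} (with $p(t)=t^k$, using linearity of $Y\mapsto p(A)Y$) gives
\[\mathbb E\big[\big([A^kX^{(n)}]_o-[A^kX]_o\big)^2\big]=\int t^{2k}\,d\mu_{X^{(n)}-X}\le d^{2k}\,\mathbb E\big[(X^{(n)}_o-X_o)^2\big]=d^{2k}\,\|p_n(A)\delta_o-\alpha\|_{\ell^2(G)}^2\to 0.\]
A Cauchy--Schwarz estimate then yields $\int t^k\,d\mu_{X^{(n)}}=\langle A^kX^{(n)},X^{(n)}\rangle\to\langle A^kX,X\rangle=\int t^k\,d\mu_X$. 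On the other hand $p_n=\widehat{p_n(A)\delta_o}\to\hat\alpha$ in $L^2(\mathbb R,\nu)$, whence $p_n^2\to\hat\alpha^2$ in $L^1(\mathbb R,\nu)$ and $\int t^k p_n^2\,d\nu\to\int t^k\hat\alpha^2\,d\nu$. Comparing the two limits for every $k$, and using that a finite measure on a compact interval is determined by its moments, I conclude $\mu_X=\hat\alpha^2\,\nu$.

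\emph{Step 3: surjectivity, part $(c)$.} Given $h\in L^2(\mathbb R,\nu)$, let $\alpha=\Theta^{-1}(h)\in\mathcal L$, which exists since $\Theta$ is onto; then $\hat\alpha=h$ and, by part $(b)$, the spherical linear factor of i.i.d.\ process given by $\alpha$ has spectral measure with density $h^2$ with respect to $\nu$ (a finite measure, as $h^2\in L^1(\nu)$). The routine ingredients here are the Weierstrass density step and the polynomial algebra; the point demanding the most care is part $(b)$ for a non-polynomial $\alpha$, namely transferring the approximation $p_n(A)\delta_o\to\alpha$ from $\ell^2(G)$ to convergence of the graphing-theoretic moments $\int t^k\,d\mu_{X^{(n)}}$ — which is exactly where the boundedness of $A^k$ and Lemma~\ref{lem:efx2} are used.
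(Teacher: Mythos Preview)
Your proof is correct and follows essentially the same route as the paper: define the isometry on polynomial vectors via $p(A)\delta_o\mapsto p$ using \eqref{eq:Ak}, extend by density, and for $(b)$ approximate $\alpha$ by polynomials and pass the moment identities $\langle A^kX^{(n)},X^{(n)}\rangle=\int t^kp_n^2\,d\nu$ to the limit using Cauchy--Schwarz and $L^2(\nu)$-convergence of $p_n\to\hat\alpha$. Your write-up is in fact a bit more explicit than the paper's (Weierstrass density for surjectivity, the intertwining $A\leftrightarrow t$, and the use of Lemma~\ref{lem:efx2} applied to $X^{(n)}-X$ to control the moment convergence), but there is no substantive difference in strategy.
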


\begin{proof} 
$(a)$ Let $p$ be a polynomial. By equation \eqref{eq:Ak} we have 
\[\langle p(A)\delta_o, p(A)\delta_o\rangle_G=\int  p^2(t)d\nu(t).\]  
It follows that $p(A)\delta_o\mapsto p$ is an isometry from the appropriate subspace of $\ell^2(G)$  to the set of polynomials in $L^2(\mathbb R, \nu)$.  As usual, this isometry has a unique extension to the $\ell^2$-closure; that is, there is an isometry between $\mathcal L$ and $L^2(\mathbb R, \nu)$ (as polynomials form a dense set in the latter space).

$(b)$ Let $(p_n)$ be a sequence of polynomials tending to $\hat \alpha$ in $L^2(\mathbb R, \nu)$. Let $Y$ be an i.i.d.\ process, and  $X$ is the spherical linear factor of i.i.d.\ process obtained by $\alpha$ from $Y$. The Cauchy--Schwarz inequality implies
\begin{equation*}\lim_{n\rightarrow \infty} \langle A^k p_n(A)Y, p_n(A)Y\rangle=\langle A^k X, X\rangle \qquad (k\geq 0).\end{equation*}
By equations \eqref{eq:Ak} and \eqref{eq:cov} this yields 
\begin{equation*}\lim_{n\rightarrow \infty} \int t^k p_n^2(t) d\nu(t)=\lim_{n\rightarrow \infty} \langle A^k p_n(A)\delta_o, p_n(A)\delta_o\rangle_G=\langle A^k X, X\rangle \ (k\geq 0).\end{equation*}

On the other hand, since $(p_n)$ tends to $\hat \alpha $ in $L^2(\mathbb R, \nu)$, we conclude that 
\[\langle A^k X, X\rangle=\int t^k \hat\alpha^2 (t) d\nu(t) \qquad (k\geq 0).\]
This implies that $d\mu_X=\hat\alpha^2 d\nu$, and the proof of part $(b)$ is complete.

$(c)$ Based on the isometry between $\mathcal L$ and $L^2(\mathbb R, \nu)$, let $\alpha \in \mathcal L$ be chosen such that $\hat \alpha = h$. Let $X$ be the spherical linear function given by $\alpha$. The argument in the proof of $(b)$ implies that the spectral measure of $X$ has density $h^2$ with respect to $\nu$.
\end{proof}

\begin{lemma}Let $X$ be an invariant random process such that $\mathbb E(X_o)=0$, $X$ has marginals with finite second moments, and $c_X\in \ell^2(G)$. Then $\mu_X$ is absolutely continuous with respect to the spectral measure $\nu$ of the graph.  \label{lem:block}
\end{lemma}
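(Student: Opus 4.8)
The plan is to exhibit the density of $\mu_X$ with respect to $\nu$ by transporting the covariance function $c_X$ through the isometry $\mathcal L\cong L^2(\mathbb R,\nu)$ of Lemma \ref{lem:gyok}. First I would recall from \eqref{eq:cov} that $\langle A^k\delta_o,c_X\rangle_G=\int t^k\,d\mu_X(t)$ for every $k\geq 0$, a finite sum on the left since $A^k\delta_o$ is finitely supported. Because $c_X\in\ell^2(G)$, I would split it orthogonally as $c_X=\beta+\gamma$ with $\beta$ the projection of $c_X$ onto the closed subspace $\mathcal L$ and $\gamma\perp\mathcal L$. Each vector $A^k\delta_o$ lies in $\mathcal L$, so $\langle A^k\delta_o,c_X\rangle_G=\langle A^k\delta_o,\beta\rangle_G$: the component $\gamma$ never sees these moments.

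Next, let $\hat\beta\in L^2(\mathbb R,\nu)$ be the image of $\beta$ under the isometry of Lemma \ref{lem:gyok}(a). That isometry is built so that $p(A)\delta_o\mapsto p$; in particular $A^k\delta_o\mapsto t^k$. Polarizing the norm identity in Lemma \ref{lem:gyok}(a) gives $\langle\alpha_1,\alpha_2\rangle_G=\int\hat\alpha_1\hat\alpha_2\,d\nu$ for $\alpha_1,\alpha_2\in\mathcal L$, and applying this with $\alpha_1=A^k\delta_o$, $\alpha_2=\beta$ yields $\langle A^k\delta_o,\beta\rangle_G=\int t^k\hat\beta(t)\,d\nu(t)$; note $\hat\beta\in L^2(\nu)\subseteq L^1(\nu)$ since $\nu$ is finite. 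Combining with the previous step,
\[\int t^k\,d\mu_X(t)=\int t^k\hat\beta(t)\,d\nu(t)\qquad(k\geq 0).\]

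Finally I would invoke determinacy of the moment problem on the compact interval $[-d,d]$: since polynomials are uniformly dense in $C([-d,d])$, two finite (signed) measures supported there with equal moments of all orders coincide, whence $d\mu_X=\hat\beta\,d\nu$. Thus $\mu_X\ll\nu$, with density $\hat\beta$ (which is automatically nonnegative $\nu$-a.e.\ because $\mu_X$ is a positive measure). I do not anticipate a real obstruction; the only points needing care are the identification $A^k\delta_o\leftrightarrow t^k$ and the passage from the norm identity of Lemma \ref{lem:gyok}(a) to the bilinear one, together with the observation that the orthogonal complement of $\mathcal L$ is invisible to the quantities $\langle A^k\delta_o,c_X\rangle_G$ --- routine Hilbert-space bookkeeping.
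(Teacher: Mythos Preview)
Your proposal is correct and follows essentially the same approach as the paper: project $c_X$ onto $\mathcal L$, use the isometry $\mathcal L\cong L^2(\mathbb R,\nu)$ from Lemma~\ref{lem:gyok} to identify the density $\hat\beta$, and conclude by moment determinacy on $[-d,d]$. The paper organizes the argument in three stages (polynomial $c_X$, then $c_X\in\mathcal L$ via approximation, then general $c_X\in\ell^2(G)$ via projection), whereas you project first and then apply the polarized isometry directly; this is a more compact presentation of the same proof.
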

\begin{proof}
Suppose first that $c_X=p(A)\delta_o$ for some polynomial $p$. Then, by self-adjointness and equation \eqref{eq:Ak}, we have  
\[\langle A^k \delta_o, c_X\rangle_G=\langle A^k \delta_o, p(A)\delta_o\rangle_G=\int t^k p(t) d\nu(t)\qquad (k\geq 0).\]
By equation \eqref{eq:cov} we obtain that the measure with density function $p$ will be the spectral measure of $X$. Hence $\mu_X$ is absolutely continuous with respect to $\nu$.

Next, we assume that $c_X\in \mathcal L$. Let $(p_n)$ be a sequence of polynomials converging to $c_X$ in $\ell^2(G)$. Taking limits in the previous equation and applying Cauchy--Schwarz inequality we obtain that 
\begin{align*}\langle A^k \delta_o, c_X\rangle&= \lim_{n\rightarrow\infty} \langle A^k \delta_o, p_n(A)\delta_o\rangle_G =\lim_{n\rightarrow\infty} \int t^k p_n(t) d\nu(t)\\&=\int t^k \widehat{c_X} d\nu(t),\end{align*}
where $\widehat{c_X}\in L^2([-d, d], \nu)$ is the image of $c_X$ at the isomorphism defined in Lemma \ref{lem:gyok}. Again, this implies that $\mu_X$ is absolutely continuous with respect to $\nu$ with density $\widehat{c_X}$. 

Finally, let $c_X\in \ell^2(G)$ be arbitrary. By definition, $\mathcal L$ is a closed linear subspace in the Hilbert space $\ell^2(G)$. Let $\overline{c_X}\in \mathcal L$ be the projection of $c_X$ into this subspace. Since the projection does not change the scalar product with vectors from the subspace, we have 
\[\langle A^k \delta_o, c_X\rangle_G=\langle A^k \delta_o, \overline{c_X}\rangle_G \qquad (k\geq 0).\]
  Therefore the spectral measure of $X$ is the same as the spectral measure corresponding to $\overline {c_X}$, which is absolutely continuous with respect to $\nu$.
  \end{proof}

\begin{proof}[{\bf Proof of Theorem $\ref{tfiid}$.}]

We start  with showing that $(i)$ implies $(ii)$. Let $\mu$ be a finite measure, which is absolutely 
continuous with respect to the spectral measure $\nu$; its density is $g\geq 0$.  Since $\sqrt{g}\in L^2([-d,d], \nu)$,  by part $(c)$ of Lemma \ref{lem:gyok} there exists a spherical linear factor of i.i.d.\ process with spectral measure having density $g$ with respect to $\nu$.

$(ii)\Rightarrow (iii)$, $(iii)\Rightarrow (iv)$ are trivial.

$(iv)\Rightarrow (v)$ We say that a factor of i.i.d.\ process is a block factor of i.i.d.\ if the factor map depends only on the values in a ball of finite radius.  

A factor of i.i.d.\ process can be approximated with block factor of i.i.d.\ processes in the $\bar d_2$-distance. See \cite{lyons} for trees, but the proof is similar for vertex-transitive graphs. (To see this, let $f\in L^2(\Omega, P)$ be the rule of a factor of i.i.d.\ process, and let $\mathcal F_r=\sigma\{\omega_v: |v|\leq r\}\subset \Omega$. By the martingale convergence theorem, the sequence of block factor of i.i.d.\ processes with rule $\mathbb E(f|\mathcal F_r)$ converges to the factor of i.i.d.\ process with rule $f$ in the $\bar d_2$-metric.) Thus,  $\bar d_2$-limits of factor of i.i.d.\ processes can also be approximated with block factors. The covariance structure of a block factor of i.i.d.\ process is finitely supported, hence it is in $\ell^2(G)$. Thus every block factor of i.i.d.\ process has absolutely continuous spectral measure by Lemma \ref{lem:block}. This shows that $(iv)$ implies $(v)$.

Finally, we prove that $(v)$ implies $(i)$. Let  $(X^{(n)})_{n\geq 1}$ be a sequence of invariant random processes such that each $\mu^{(n)}=\mu_{X^{(n)}}$ is absolutely continuous  with respect to the spectral measure $\nu$ with density function $g^{(n)}$. Furthermore, assume that $X^{(n)}\rightarrow X$ in the $\bar d_2$-distance as $n\rightarrow \infty$.

It follows from the Cauchy--Schwarz inequality and the definition of $\bar d_2$-convergence that $\mathrm{Var}(X^{(n)}_o)\rightarrow \mathrm{Var}(X_o)$ holds as $n\rightarrow \infty$. 
Therefore Proposition \ref{all:dtv} implies that
\begin{equation}\label{eq:tv}d(\mu^{(n)},\mu_X)_{TV}\rightarrow 0 \qquad (n\rightarrow\infty).\end{equation}
We have
\[d\big(\mu^{(n)},\mu^{(m)}\big)_{TV}=\frac{1}{2}\int \big|g^{(n)}-g^{(m)}\big|d\nu \qquad (n,m\geq 1).\]
Equation \eqref{eq:tv} implies that $\big(g^{(n)}\big)$ is a Cauchy sequence in $L^1([-d,d], \nu)$.  Due to the completeness of this space, we get that there exists $g\in L^1([-d,d], \nu)$ such that 
\[g^{(n)}\rightarrow g\quad \text{ in }L^1([-d,d], \nu) \qquad (n\rightarrow \infty).\]
The dominated convergence theorem implies that \[\mu^{(n)}=g^{(n)}d\nu\stackrel{TV}\rightarrow g d\nu \qquad (n\rightarrow \infty)\] with respect to the total variation distance 
(by this notation, we mean the measures having the given density with respect to $\nu$). Hence $d\mu_X=g d\nu$, and we conclude that $\mu_X$ is absolutely continuous with respect to the spectral measure of the graph $G$.
\end{proof}

\section{Proof of Theorem \ref{tlimfiid}} 

\label{proof3}

\begin{lemma}\label{lem:limit}Let $(X^{(n)})$ be a sequence of invariant random processes (with marginals having finite second moments) with covariance structures $(c_n)$ and spectral measures $(\mu_n)$. 
\begin{enumerate} [(a)]
\item  Suppose  that $X$ is another invariant random process such that
\[c_n(v)\rightarrow c_X(v) \text{ for all } v\in V(G) \text{ as } n\rightarrow \infty.\] 
Then $\mu_n\rightarrow \mu_X$ weakly as $n\rightarrow\infty$. In particular, $\mu_n$ tends to $\mu$ weakly if $X^{(n)}$ tends to $X$ in distribution and each $X^{(n)}$ is a Gaussian process.

\item Suppose that $\mu_n$ converges weakly to a finite measure $\mu$. Then there exists an invariant random process $X$ with spectral measure $\mu=\mu_X$. 
\end{enumerate}
\end{lemma}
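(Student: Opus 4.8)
For part $(a)$, the plan is to pass from convergence of covariance structures to convergence of moments of the spectral measures, then invoke the method of moments. By equation \eqref{eq:cov}, the $k$-th moment of $\mu_n$ equals $\langle A^k \delta_o, c_n\rangle_G$, which is a \emph{finite} sum: it ranges only over vertices $v$ with $(A^k\delta_o)(v)\neq 0$, i.e.\ those at distance at most $k$ from the root, and the number of such vertices is bounded by $d^k$ uniformly in $n$. Hence the pointwise convergence $c_n(v)\to c_X(v)$ forces $\int t^k\,d\mu_n(t)\to\int t^k\,d\mu_X(t)$ for every $k\geq 0$. All the measures $\mu_n$ and $\mu_X$ are supported on the fixed compact interval $[-d,d]$, and $\mu_n([-d,d])=\langle A^0\delta_o,c_n\rangle_G=c_n(o)\to c_X(o)=\mu_X([-d,d])$, so the total masses converge too. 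On a fixed compact interval, convergence of all moments together with convergence of total mass implies weak convergence (a measure on $[-d,d]$ is determined by its moments, and tightness is automatic). This gives $\mu_n\to\mu_X$ weakly. For the ``in particular'' clause: if $X^{(n)}\to X$ in distribution, then each finite-dimensional marginal converges in distribution; since each $X^{(n)}$ is Gaussian with mean zero, convergence in distribution of the pair $(X^{(n)}_o,X^{(n)}_v)$ implies convergence of the covariances $c_n(v)=\mathrm{cov}(X^{(n)}_o,X^{(n)}_v)\to \mathrm{cov}(X_o,X_v)=c_X(v)$, because for jointly Gaussian vectors weak convergence entails convergence of the covariance matrices; then apply part $(a)$.

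For part $(b)$, the plan is to realize $\mu$ as the spectral measure of a suitable invariant Gaussian process, constructed via the isometry of Lemma \ref{lem:gyok}. The obstacle here is that weak convergence of $\mu_n$ does \emph{not} preserve absolute continuity with respect to $\nu$, so $\mu$ need not be absolutely continuous with respect to $\nu$ and Lemma \ref{lem:gyok}$(c)$ does not apply directly. However, $\mu$ is still supported on $[-d,d]$ (as a weak limit of measures supported there), and more is true: since the masses converge and each $\mu_n$ is dominated by the graphing construction on the interval $[-d,d]$, the support of $\mu$ is contained in $[-d,d]$. The idea is to decompose $\mu$ with respect to $\nu$ as $\mu = \mu_{ac} + \mu_{s}$ where $\mu_{ac}\ll\nu$ and $\mu_s\perp\nu$; handle $\mu_{ac}$ by Lemma \ref{lem:gyok}$(c)$, producing a spherical linear factor of i.i.d.\ process $X_1$ with $\mu_{X_1}=\mu_{ac}$; and handle $\mu_s$ by a direct spectral-theoretic construction of an invariant Gaussian process whose spectral measure is $\mu_s$ (for instance, a Gaussian wave-type superposition, or an abstract Gaussian process on the Hilbert space $L^2([-d,d],\mu_s)$ with the multiplication-by-$t$ operator playing the role of the graphing). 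One then takes $X$ to be an independent sum $X = X_1 \oplus X_2$, noting that spectral measures add under independent invariant couplings: $\mu_X = \mu_{X_1}+\mu_{X_2} = \mu_{ac}+\mu_s = \mu$.

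The main obstacle is the construction in part $(b)$ of an invariant process realizing the \emph{singular} part $\mu_s$, since the linear-factor machinery of Lemma \ref{lem:gyok} only covers the $\nu$-absolutely-continuous case. The key observation that unlocks this is that any finite measure on $[-d,d]$ arises as the spectral measure at some cyclic vector of \emph{some} bounded self-adjoint operator of norm at most $d$ — namely multiplication by $t$ on $L^2([-d,d],\mu_s)$ — and the abstract theory of Gaussian processes (or of graphings, as in the references \citet*{HLSz} and \cite{decay} cited in the preliminaries) lets one manufacture an invariant random process whose graphing operator $\mathcal G$ reproduces exactly this spectral data. Making this last step rigorous, i.e.\ checking that the abstractly-constructed process is genuinely invariant and that its graphing has the prescribed spectral measure, is where the real work lies; everything else is routine application of the method of moments and the results already established in Section \ref{proof2}.
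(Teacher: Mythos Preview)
Your argument for part $(a)$ is correct and matches the paper's proof: both pass from pointwise convergence of $c_n$ to convergence of $\int t^k\,d\mu_n$ via the finite sum $\langle A^k\delta_o,c_n\rangle_G$, and then conclude weak convergence on the compact interval $[-d,d]$. The paper phrases the last step as ``polynomials are dense in continuous functions in sup norm'' rather than ``method of moments'', but these are equivalent here.

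For part $(b)$, however, you have taken an unnecessarily hard road and left a genuine gap. You try to build a process with spectral measure $\mu$ \emph{from scratch}, decomposing $\mu=\mu_{ac}+\mu_s$ and then invoking an unspecified ``abstract theory of Gaussian processes or graphings'' to realize $\mu_s$. You yourself concede that ``making this last step rigorous \ldots\ is where the real work lies'', and indeed nothing in the paper up to this point gives you such a construction; in fact Theorem~\ref{tpsp}, which would do the job, is proved \emph{using} this very lemma, so appealing to that circle of ideas is circular. Not every finite measure on $[-d,d]$ is the spectral measure of an invariant process (only those supported on the process spectrum are), so a bare abstract construction cannot work without further input.

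The paper's proof sidesteps all of this by exploiting the hypothesis that $\mu$ arises as a weak limit of spectral measures of \emph{actual} invariant processes $X^{(n)}$. Since $\mu_n([-d,d])=\mathbb E\big[(X^{(n)}_o)^2\big]$ is bounded, Cauchy--Schwarz gives a uniform bound on all the covariance structures $c_n$. The vertex set is countable, so a diagonal argument yields a subsequence with $c_{n_m}(v)\to c(v)$ for every $v$. Pointwise limits of invariant positive semidefinite functions are again invariant and positive semidefinite, so $c$ is the covariance structure of some invariant Gaussian process $X$. Now part $(a)$ gives $\mu_{n_m}\to\mu_X$ weakly, whence $\mu_X=\mu$. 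This compactness argument is short, uses only what has already been established, and is the intended proof.
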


\begin{proof}
$(a)$ Since every $p(A)\delta_o$ has finite support, by equation \eqref{eq:Ak} we obtain that
\[\int p(t) d\mu_n(t)=\langle p(A) \delta_o, c_n\rangle_G\rightarrow \langle p(A) \delta_o, c\rangle_G=\int p(t) d\mu(t)\]
holds as $n\rightarrow \infty$ for all polynomials $p$. The set of polynomials is dense in the set of continuous functions with respect to the supremum norm, hence  $\mu_n$ tends to $\mu$ weakly.

Furthermore, in case of Gaussian processes, convergence in distribution implies the pointwise convergence of the covariance structures.

$(b)$ We neglect the degenerate case $\mu([-d, d])=0$. Lemma \ref{lem:efx2} and the finiteness of $\mu$ implies that $\mathbb E\big(\big[X^{(n)}_o\big]^2\big)$ is bounded.  Hence the covariance structures are uniformly bounded by the Cauchy--Schwarz inequality. Since the vertex set of $G$ is countable, we may choose a subsequence $X^{(n_m)}$ such that 
\begin{equation}\label{eq:nm}c_{n_m}(v)\rightarrow c(v) \text{ for all } v\in V(G) \text{ as } m\rightarrow \infty\end{equation}
for some $c: V(G)\rightarrow \mathbb R$. Pointwise convergence preserves the property of being a covariance structure of an invariant process, hence there exists an invariant Gaussian process $X$ with covariance structure $c$. On the other hand, equation \eqref{eq:nm} and part $(a)$ imply that  $\mu_{n_m}\rightarrow \mu_X$ weakly as $m\rightarrow \infty$. We conclude that $\mu=\mu_X$.
\end{proof}

\begin{proof}[Proof of Theorem $\ref{tlimfiid}$]

$(ii)\Rightarrow (iii)$ is trivial.

$(iii)\Rightarrow (i)$ 
Suppose that the invariant process $X$ is the limit of the factor of i.i.d.\ processes $(X^{(n)})$ in distribution, and $X$ has marginals with finite second moments.  Denote by $X^{(n,T)}$ the truncation of $X^{(n)}$ at level $T$:
\[X^{(n,T)}_v=\begin{cases} T, &\text{ if } X^{(n)}_v>T;\\ X^{(n)}_v, &\text{ if } X^{(n)}_v\in [-T, T];\\-T, &\text{ if } X^{(n)}_v<-T; \end{cases} \qquad (v\in V(T_d)).\]  Let $X^T$ be the truncated version of $X$.

Since the truncation is a continuous function, $X^{(n,T)}\rightarrow  X^{T}$ in distribution as $n\rightarrow \infty$. By boundedness, we have 
\begin{equation}\label{eq:cov1}
\mathrm{cov}(X^{(n,T)}_v,X^{(n,T)}_w)\rightarrow \mathrm{cov}(X^{T}_v,X^{T}_w) \qquad (n\rightarrow \infty)
\end{equation}
for all $v,w\in V(T_d)$ and $T>0$. 

The second moment condition on $X$ implies 
\begin{equation}\label{eq:cov2}
\mathrm{cov}(X^{T}_v,X^{T}_w)\rightarrow \mathrm{cov}(X_v,X_w) \qquad (T\rightarrow \infty)
\end{equation}
for all $v,w\in V(G)$.

Since $X^{(n)}$ is a factor of i.i.d.\ process, $X^{(n,T)}$ is also factor of i.i.d. 
By \eqref{eq:cov1}, \eqref{eq:cov2} and a diagonalization argument we obtain that the covariance structure of $X$ is the pointwise limit of covariance structures of factor of i.i.d. processes. (Note that $\mathrm{cov}(X_v^{(n)}, X_w^{(n)})$ may not converge to $\mathrm{cov}(X_v, X_w)$.)

To summarize, we can find a sequence of factor of i.i.d.\ covariance structures  $(c_m)$ such that $c_m(v)\rightarrow c_X(v)$ for all $v\in V(G)$ as $m\rightarrow \infty$.  Part $(a)$ of Lemma \ref{lem:limit} implies that $\mu_m$ tends to $\mu$ weakly. By Theorem \ref{tfiid} every $\mu_m$ is absolutely continuous with respect to $\nu$. Therefore the support of $\mu$ is contained in $\mathrm{supp}( \nu)$.

$(i)\Rightarrow (ii)$  Let $\mu$ be a finite measure such that $\mathrm{supp}(\mu)\subseteq \mathrm{supp} (\nu)$.
There exist a sequence of finite measures $(\mu_n)$ such that it converges weakly to $\mu$ and every $\mu_n$ is absolutely continuous with respect to $\nu$. 

On the other hand, Theorem \ref{tfiid} implies that for every  $\mu_n$ there exists a spherical linear factor of i.i.d process $X^{(n)}$ 
whose spectral measure is $\mu_n$.  It follows from part $(b)$ of Lemma \ref{lem:limit}  that $\mu$ is the spectral measure of some Gaussian process $X$. In addition, the proof of the lemma shows that the covariance structures of a subsequence of $X^{(n)}$ converge pointwise to the covariance structure of $X$. Since every $X^{(n)}$ is spherical, it is also Gaussian. In case of Gaussian processes the pointwise convergence of covariance structures implies convergence in distribution. Therefore some subsequence of  $\big(X^{(n)}\big)$ converges to $X$ in distribution, which concludes the proof. 
\end{proof} 
 
\begin{remark} Notice that linear factor of i.i.d.\ processes are always Gaussian. Therefore for an absolutely continuous measure we can find a Gaussian factor of i.i.d.\ process with this spectral measure. Similarly, for measures supported on $\mathrm{supp}(\nu)$, the appropriate process is the weak limit of Gaussian factor of i.i.d.\ processes as well.
\end{remark} 
 
\section{Applications}

\label{branching}

\subsection{Process spectrum} 
\begin{proof}[Proof of Theorem \ref{tpsp}]
First we prove the statement for atomic measures. Suppose that $x\in \mathrm{psp}(G)$, and $\mu$ is an atom at $x$. Given $\varepsilon>0$, there exists a process $X^{\varepsilon}$ such that $\mu_{X^{\varepsilon}}([x-\varepsilon, x+\varepsilon])>0$. It follows that there exists a polynomial $p_{\varepsilon}$ such that the following hold for the measure $d\mu^*_{\varepsilon}=p_{\varepsilon} d \mu_{X^{\varepsilon}}$ (i.e.\ the measure $\mu^*_{\varepsilon}$ that has density function $p_{\varepsilon}$ with respect to $\mu_{X^{\varepsilon}}$): 
\[\mu^*_{\varepsilon}(\mathbb R\setminus [x-\varepsilon, x+\varepsilon])<\varepsilon; \quad \mu(x)-\varepsilon\leq \mu^*_{\varepsilon}([x-\varepsilon, x+\varepsilon])\leq \mu(x)+\varepsilon.\]
  Notice that $\mu^*_{\varepsilon}$ tends to $\mu$ weakly as $\varepsilon\rightarrow 0$.   Equation \eqref{eq:sp} implies that the process $p_{\varepsilon}(X^{\varepsilon})$, which is a finite linear factor of $X^{\varepsilon}$ by Definition \ref{def:polylin}, has spectral measure $\mu^*_{\varepsilon}$. Putting this together with   Lemma \ref{lem:limit} $(b)$, we conclude that $\mu$ is the spectral measure of some invariant random process.

Further on, if the support of $\mu$ consists of finitely many atoms, then one can take the sum of independent copies of  invariant random processes constructed for atomic measures,  multiplied by appropriate constants. 
The covariance structure is additive due to independence, which shows by equation \eqref{eq:cov} that the spectral measure is $\mu$.
  
Finally, if we have an arbitrary finite measure $\mu$ whose support is contained in $\mathrm{psp}(G)$, 
then it can be approximated weakly with measures $\mu_n$, where each $\mu_n$ is supported on finitely many atoms. For 
every $\mu_n$ we already have a process $X^{(n)}$ with spectral measure $\mu_n$. We finish the proof by applying part $(b)$ of Lemma \ref{lem:limit}. 
\end{proof}

\medskip

\begin{proof}[Proof of Proposition \ref{kazhdan}]
$(ii)\Rightarrow (i)$ We will use the next formulation of Kazhdan's property: every sequence of positive definite functions on $G$ that converges to 1 pointwise (i.e.\ on compact subsets) converges to 1 uniformly on $G$. 

Let $X$ be the following process: constant 1 on all vertices with probability $1/2$, and constant $-1$ with probability $1/2$. Then $c_X(v)=1$ for all $v\in V(G)$, and, by equation \eqref{eq:cov} it follows that $\mu_X=\delta_d$ is an atomic measure at $d$. 

Let $c_n: V(G)\rightarrow \mathbb R$ be a sequence of positive definite functions converging pointwise to $1$. We can find a sequence of invariant  processes $(X^{(n)})$ such that $X^{(n)}$ has covariance structure $c_n$.  By Lemma \ref{lem:limit} $(a)$ we obtain that $\mu_{X^{(n)}}\rightarrow \mu_X$ weakly. It follows from $(ii)$ and the fact $\mu_X=\delta_d$ that 
\[\mu_{X^{(n)}} = (1-\varepsilon_n)\delta_d + \mu'_n,\]
where $\varepsilon_n\rightarrow 0$, and $(\mu'_n)$ is a sequence of measures such that $\mu'_n([-d, d])$ tends to $0$ as $n\rightarrow \infty$, and $\mu'_n$ is supported on $\mathrm{psp}(G)$. 

Since $\mu_{X^{(n)}}$ is the spectral measure corresponding to $X^{(n)}$, we get that $\mu'_n$ is the spectral measure corresponding to $c_n-(1-\varepsilon)$. That is, we have 
\[\langle A^k(c_n-(1-\varepsilon)), \delta_o\rangle=\int t^k d\mu'_n \qquad (k=0,1, \ldots). \]
It follows  that $(c_n-(1-\varepsilon))$ is positive definite for each $n$, and hence it is a covariance structure of an invariant process. The fact $\mu'_n([-d, d])\rightarrow 0$ implies that $(c_n-(1-\varepsilon))(0)$ tends to 0. By the Cauchy--Schwarz inequality we obtain that $c_n-(1-\varepsilon)$ converges to 0 uniformly. Hence $c_n$ converges to constant 1 uniformly, and $G$ has Kazhdan's property $(T)$.

$(i)\Rightarrow (ii)$  
Let $X$ be the constant 1 process. Its spectral measure is an atomic measure at $d$, hence $d$ is always in the process spectrum. Suppose (for contradiction) that $d$ is not an isolated point. Choose a sequence of numbers $(a_n)$ with limit $d$ such that $a_n\in \text{psp}(G)$ for all $n$. Theorem \ref{tpsp} implies that we can find a sequence of invariant processes $X^{(n)}$ such that the spectral measure of $X^{(n)}$ is an atomic measure at $a_n$ for each $n$. It follows that $\mu_{X^{(n)}}\rightarrow \mu_X$ weakly as $n\rightarrow \infty$. Similarly to the proof of Lemma \ref{lem:limit}, we can assume that the covariance structures of $X^{(n)}$ converge pointwise to $c_X$, by choosing an appropriate subsequence.
By the characterization of Kazhdan's property using positive semidefinite functions (covariance structures are positive semidefinite), we get that $c_{X^{(n)}}\rightarrow c_X\equiv 1$ uniformly on $V(G)$.

On the other hand, if $H$ is infinite and has Kazhdan's property $(T)$, then it can not be amenable \citep[see e.g.][]{harpe}. Therefore its spectral radius is strictly less than $d$. Therefore the covariance of $X_o$ and $X_v$, where $v$ is the endpoint of a random walk of length $k$, decays exponentially as a function of $k$. This contradicts the uniform convergence of correlation structures above. Hence  the process spectral radius is less than $d$.\end{proof}

\subsection{Gaussian wave functions.}
\begin{proof}[Proof of Corollary \ref{cor:dbar}] We check that $\mu_{X}$ is the Dirac measure which puts an atom of weight 1 at $\lambda$:
\[\langle A^k X, X\rangle=\lambda^k \langle X, X\rangle=\lambda^k \mathrm{Var}(X_o)=\lambda^k=\int t^k d\mu_X(t).\] 

Different Dirac measures are singular measures.
On the other hand, the measure $\mu_Y$ is absolutely continuous with respect to the spectral measure of the graph by Theorem \ref{tfiid}. Hence it is absolutely continuous with respect to the Lebesgue measure in case $(b)$, and again, it is singular to the Dirac measure. Therefore both parts of the statement follow from Proposition \ref{all:dtv} $(b)$. 
 \end{proof}

\subsection{Linear factor of i.i.d.\ processes and $\bar d_2$-convergence.} 

\label{closed} 

First we prove the following statement about spherical linear processes. 

\begin{prop}\label{all:sph} 
Suppose that $X^{(n)}$ is a sequence of spherical linear factor of i.i.d.\ processes such that $X^{(n)}\rightarrow X$ with respect to the $\bar d_2$-distance as $n\rightarrow \infty$. Then (the distribution of) $X$ is a spherical linear factor of i.i.d.\ process.
\end{prop}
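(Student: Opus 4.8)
The plan is to reduce everything to the analysis of spectral measures carried out in Lemma \ref{lem:gyok}, using the isometry between $\mathcal L\subseteq \ell^2(G)$ and $L^2(\mathbb R,\nu)$. The key point is that a spherical linear factor of i.i.d.\ process $X$ is determined (in distribution) by its coefficient function $\beta\in\mathcal L$, which in turn corresponds under the isometry to a function $\hat\beta\in L^2(\mathbb R,\nu)$; and by part $(b)$ of Lemma \ref{lem:gyok} the spectral measure of $X$ is $d\mu_X=\hat\beta^2\,d\nu$. So the task is: given $\bar d_2$-convergence $X^{(n)}\to X$, produce $\beta\in\mathcal L$ whose associated spherical linear process has the same distribution as $X$. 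Because all the processes involved are centered Gaussian (spherical linear factors are Gaussian, and by Corollary \ref{cor:lin}-type reasoning the $\bar d_2$-limit of Gaussians is Gaussian — more carefully, $\bar d_2$-convergence forces pointwise convergence of covariance structures, and a pointwise limit of Gaussian covariance structures is the covariance structure of a Gaussian process), it suffices to match covariance structures, equivalently (by equation \eqref{eq:cov}) to match spectral measures.

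First I would observe that $\bar d_2$-convergence implies $\mathrm{Var}(X^{(n)}_o)\to\mathrm{Var}(X_o)$ (Cauchy–Schwarz, as used in the proof of Theorem \ref{tfiid}), and then invoke Proposition \ref{all:dtv}(a) to conclude $d_{TV}(\mu_{X^{(n)}},\mu_X)\to 0$. Writing $d\mu_{X^{(n)}}=\hat\beta_n^2\,d\nu$ via Lemma \ref{lem:gyok}(b), the total-variation Cauchy property gives that $(\hat\beta_n^2)$ is Cauchy in $L^1(\mathbb R,\nu)$, hence converges in $L^1$ to some $h\ge 0$; moreover $\mu_X$ must then have density $h$ with respect to $\nu$, so in particular $\mu_X$ is absolutely continuous with respect to $\nu$ (this is already guaranteed by Theorem \ref{tfiid}, so it is consistent). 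Next I would pass from $L^1$-convergence of the squares $\hat\beta_n^2\to h$ to constructing the limiting coefficient: set $\hat\beta=\sqrt{h}\in L^2(\mathbb R,\nu)$ (note $\int h\,d\nu=\mu_X(\mathbb R)=\mathrm{Var}(X_o)<\infty$), let $\beta\in\mathcal L$ be its preimage under the isometry of Lemma \ref{lem:gyok}(a), and let $\tilde X$ be the spherical linear factor of i.i.d.\ process determined by $\beta$. By Lemma \ref{lem:gyok}(b), $d\mu_{\tilde X}=\hat\beta^2\,d\nu=h\,d\nu=d\mu_X$.

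Finally I would argue that $X$ and $\tilde X$ have the same distribution. Since both are centered Gaussian processes, it is enough that $c_X=c_{\tilde X}$ as functions on $V(G)$; and since both spectral measures coincide, equation \eqref{eq:cov} gives $\langle A^k\delta_o,c_X\rangle_G=\int t^k\,d\mu_X=\langle A^k\delta_o,c_{\tilde X}\rangle_G$ for all $k\ge 0$. To upgrade "same moments against $A^k\delta_o$'' to "same function,'' I note that $c_{\tilde X}=\beta*\beta$-type convolution lies in $\mathcal L$ (indeed it equals the image of $\hat\beta^2$ under the isometry, as in Lemma \ref{lem:block}), while $c_X$ need not a priori lie in $\mathcal L$; but projecting $c_X$ onto $\mathcal L$ does not change the inner products $\langle A^k\delta_o,\cdot\rangle_G$, and on $\mathcal L$ the functions $\{A^k\delta_o\}$ span a dense set, so the projection of $c_X$ equals $c_{\tilde X}$. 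The remaining question is whether the component of $c_X$ orthogonal to $\mathcal L$ can be nonzero — and here I would use that $X$ is a $\bar d_2$-limit of \emph{spherical} processes, whose covariance structures all lie in $\mathcal L$, a closed subspace, combined with the pointwise (hence, after the uniform second-moment bound, $\ell^2$-weak) convergence of $c_{X^{(n)}}\to c_X$, to conclude $c_X\in\mathcal L$. Thus $c_X=c_{\tilde X}$ and $X\stackrel d=\tilde X$ is a spherical linear factor of i.i.d.\ process.

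\textbf{Main obstacle.} The delicate step is the last one: controlling the component of $c_X$ orthogonal to $\mathcal L$ and, relatedly, ensuring that $L^1$-convergence of the density \emph{squares} $\hat\beta_n^2$ actually yields convergence of the coefficients $\hat\beta_n$ themselves in $L^2(\nu)$ (so that the candidate limit $\beta$ genuinely captures the limiting process and not merely its spectral measure). The inequality $(\sqrt a-\sqrt b)^2\le |a-b|$ used in the proof of Lemma \ref{lem:xoyo} is exactly what converts $L^1$-closeness of squares into $L^2$-closeness of square roots, so I expect the same device to close this gap; the subtlety is that square roots are only defined up to sign on $\nu$-null-free sets, which is why it is cleaner to work with covariance structures in $\mathcal L$ (where the sign ambiguity disappears) rather than directly with the $\hat\beta_n$.
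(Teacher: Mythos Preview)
Your route matches the paper's exactly through the construction of $\hat\beta=\sqrt{h}$ and the candidate process $\tilde X$: Lemma \ref{lem:gyok}(b) for the densities $\hat\beta_n^2$, Proposition \ref{all:dtv}(a) for the total-variation Cauchy property, and completeness of $L^1(\nu)$ to extract $h$. The divergence is in the last step, and there your covariance-structure argument has a genuine gap.

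\textbf{The gap.} You assert that the covariance structures $c_{X^{(n)}}$ of spherical linear processes lie in $\mathcal L$, and then try to pass to a weak-$\ell^2$ limit. But $c_{X^{(n)}}$ need not lie in $\ell^2(G)$ at all: under the isometry its would-be image is $\hat\beta_n^2$, and there is no reason for $\hat\beta_n^2\in L^2(\nu)$ just because $\hat\beta_n\in L^2(\nu)$. (On $T_d$, take $\hat\beta_n(t)\sim(2\sqrt{d-1}-t)^{-1/4}$ near the spectral edge; then $\hat\beta_n\in L^2(\nu)$ but $\hat\beta_n^2\notin L^2(\nu)$.) Without $c_{X^{(n)}}\in\ell^2$, the weak-convergence step and hence the conclusion $c_X\in\mathcal L$ are unsupported.

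\textbf{How the paper closes it.} The device you yourself flagged is exactly what the paper uses, and the sign ambiguity you worry about is harmless. Since a spherical linear process is centered Gaussian with spectral measure $\hat\beta_n^2\,d\nu$, replacing $\hat\beta_n$ by $|\hat\beta_n|$ changes the coefficient in $\mathcal L$ but not the distribution of $X^{(n)}$; so one may assume $\hat\beta_n\ge 0$ from the outset. Then $(\sqrt a-\sqrt b)^2\le|a-b|$ converts $\|\hat\beta_n^2-\hat\beta^2\|_{L^1(\nu)}\to 0$ into $\|\hat\beta_n-\hat\beta\|_{L^2(\nu)}\to 0$, i.e.\ $\|\beta_n-\beta\|_{\ell^2(G)}\to 0$ by the isometry. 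Now couple $X^{(n)}$ and $\tilde X$ by applying $\beta_n$ and $\beta$ to the \emph{same} i.i.d.\ field $Z$; then
\[
\mathbb E\big[(X^{(n)}_o-\tilde X_o)^2\big]=\|\beta_n-\beta\|_{\ell^2(G)}^2\to 0,
\]
so $X^{(n)}\to\tilde X$ in $\bar d_2$. Uniqueness of $\bar d_2$-limits (via convergence in distribution) gives $X\stackrel d=\tilde X$ directly, without any separate argument that $X$ is Gaussian and without ever touching $c_X$.
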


\begin{proof}
Let $\alpha_n\in \mathcal L$ be the function which defines $X^{(n)}$.  Lemma \ref{lem:gyok} implies     $\mu_{X^{(n)}}=\widehat {\alpha_n}^2d\nu$. On the other hand, due to the $\bar d_2$-convergence of ${X^{(n)}}$ and part $(a)$ of Proposition \ref{all:dtv}, we get  that $\mu_{X^{(n)}}$ is a Cauchy sequence in total variation distance. Therefore, since $L^1([-d, d], \nu)$ is complete, we can find a function which is the limit of the sequence $\big(\widehat {\alpha_n}^2\big)$. It has to be nonnegative, hence there exists $\hat \alpha\in L^2([-d,d], \nu)$ such that $\hat \alpha\geq 0$ and 
\[\int\big|\widehat {\alpha_n}^2 - \hat \alpha^2\big| d\nu\rightarrow 0 \qquad (n\rightarrow \infty).\]

Using the inequality $(a-b)^2\leq |a^2-b^2|$ for $a, b>0$, this yields 
\[\int (\widehat {\alpha_n}-\hat \alpha)^2d\nu\rightarrow 0 \qquad (n\rightarrow \infty).\]

Let $\alpha\in \mathcal L$ be the function corresponding to $\alpha$  according to part $(a)$ of  Lemma \ref{lem:gyok}. Let $X^{\alpha}$ be the spherical linear factor of i.i.d.\ process given by $\alpha$. By applying $\alpha_n$ and $\alpha$ on the same i.i.d. process (which defines the coupling), and using Lemma \ref{lem:gyok} $(b)$, we have
\[\int (\widehat {\alpha_n}-\hat \alpha)^2d\nu=\mathbb E\big[(X^{(n)}_o-X^{\alpha}_o)^2\big].\] 

Hence $X^{(n)}$ converges to $X^{\alpha}$ in the $\bar d_2$-distance. Convergence in   $\bar d_2$-distance implies convergence in distribution, and the limit is unique. We conclude that the distribution of $X$ is equal to the distribution of $X^{\alpha}$, which is a spherical linear factor of i.i.d.\ process.
\end{proof}

\begin{proof}[Proof of Corollary \ref{cor:lin}] Invariant Gaussian processes are determined by their covariance structure. Hence every Gaussian factor of i.i.d.\ process is a linear factor of i.i.d.\ process by Theorem \ref{tfiid}. 

On the other hand, every linear factor of i.i.d.\ process is spherical on the $d$-regular tree, because every finitely supported radial function is a polynomial of $A$.  Proposition \ref{all:sph} implies that the limit of the sequence is a linear factor of i.i.d.\ process. Hence it is Gaussian. 
\end{proof}

\subsection{Gaussian free field}

We recall the definition of an analogue of the Gaussian free field on transient graphs, and we will show that all these random processes are factor of i.i.d. processes (see also Exercise 10.31.\ in \cite{lyonsperes}). 

\begin{df}[Gaussian free field] Let $G$ be a transient and vertex-transitive graph. An invariant Gaussian random process is a Gaussian free field if its covariance structure is the Green function: 
\[\sum_{k=0}^{\infty} \bigg(\frac Ad\bigg)^k.\]
\end{df}
Notice that our assumption that $G$ is transient  implies that this series is convergent, because the number of visits of a random walk starting from the root is finite almost surely.

\begin{prop}A Gaussian free field is  a linear factor of i.i.d.\ process.
\end{prop}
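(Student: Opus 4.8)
The plan is to exhibit the Gaussian free field explicitly as a spherical linear factor of i.i.d.\ process. First I would compute its spectral measure. Writing $c=\sum_{k\ge0}(A/d)^k\delta_o$ for the Green function, the defining property \eqref{eq:Ak} together with a term-by-term summation gives, for every $k\ge 0$,
\[\langle A^k\delta_o,c\rangle_G=\sum_{j\ge0}d^{-j}\int t^{k+j}\,d\nu(t)=\int t^k\,\frac{d}{d-t}\,d\nu(t),\]
where the interchange of sum and integral is justified by transience: the total mass $\int\frac{d}{d-t}\,d\nu(t)=\sum_{j\ge0}d^{-j}[A^j]_{oo}$ is exactly the expected number of returns $g(o,o)$ of the degree-normalised random walk, hence finite (in particular $\nu$ has no atom at $\pm d$). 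By \eqref{eq:cov} the Gaussian free field therefore has spectral measure $\mu=\psi^2\,d\nu$ with $\psi(t):=\sqrt{d/(d-t)}\in L^2([-d,d],\nu)$, a finite measure absolutely continuous with respect to $\nu$.

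Next I would use Lemma \ref{lem:gyok}: let $\alpha\in\mathcal L$ be the element whose image under the isometry $\mathcal L\cong L^2(\mathbb R,\nu)$ is $\psi$, and let $X$ be the spherical linear factor of i.i.d.\ process given by $\alpha$; it is Gaussian, and by Lemma \ref{lem:gyok}(b) its spectral measure is $\psi^2\,d\nu=\mu$. It then remains to check that $X$ has the \emph{same covariance structure} as the Gaussian free field, since an invariant mean-zero Gaussian process is determined by its covariance structure; this identifies the distribution of $X$ with that of the Gaussian free field and proves the claim. Fix polynomials $p_n$ with $p_n\to\psi$ in $L^2(\nu)$ and let $X^{(n)}$ be the linear factor of i.i.d.\ given by $p_n(A)\delta_o$. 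Applying $p_n(A)$ and $\alpha$ to the same i.i.d.\ field couples $X^{(n)}$ and $X$ with $\mathbb E[(X^{(n)}_o-X_o)^2]=\|p_n(A)\delta_o-\alpha\|_{\ell^2(G)}^2=\|p_n-\psi\|_{L^2(\nu)}^2\to0$, so $X^{(n)}\to X$ in $\bar d_2$, hence in distribution. Moreover the covariance structure of $X^{(n)}$ is computed directly from Definition \ref{def:linfiid}: $c_{X^{(n)}}(v)=\langle p_n(A)\delta_o,p_n(A)\delta_v\rangle_{\ell^2(G)}=[p_n^2(A)\delta_o](v)$.

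The key remaining point — the step I expect to be the main obstacle — is that $[p_n^2(A)\delta_o](v)\to g(o,v)$ for each vertex $v$. For this I would invoke the off-diagonal spectral measure of the self-adjoint operator $A$: there is a finite signed measure $\nu_{o,v}$ with $\int t^k\,d\nu_{o,v}(t)=[A^k]_{ov}$, and, by Cauchy--Schwarz applied to the projection-valued measure of $A$ together with vertex-transitivity (which forces the diagonal spectral measure at $v$ to equal $\nu$), its total variation satisfies $|\nu_{o,v}|\le\nu$. Hence $[p_n^2(A)\delta_o](v)=\int p_n^2\,d\nu_{o,v}$, and with $q_N(t)=\sum_{k=0}^N(t/d)^k$,
\[\bigl|[p_n^2(A)\delta_o](v)-g(o,v)\bigr|\le\Bigl|\int(p_n^2-q_N)\,d\nu_{o,v}\Bigr|+\bigl|[q_N(A)\delta_o](v)-g(o,v)\bigr|\le\|p_n^2-q_N\|_{L^1(\nu)}+\bigl|[q_N(A)\delta_o](v)-g(o,v)\bigr|.\]
Here $[q_N(A)\delta_o](v)=\sum_{k=0}^N(A/d)^k_{ov}\uparrow g(o,v)$ by monotone convergence of nonnegative terms, while $q_N\to\psi^2$ in $L^1(\nu)$ by dominated convergence (using $|q_N|\le\frac d{d-t}+2\in L^1(\nu)$ on $[-d,d)$ and the absence of atoms at $\pm d$), and $p_n^2\to\psi^2$ in $L^1(\nu)$ because $p_n\to\psi$ in $L^2(\nu)$. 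Letting first $n\to\infty$ and then $N\to\infty$ gives $c_{X^{(n)}}(v)\to g(o,v)$; since $X^{(n)}\to X$ in distribution and these are Gaussian, $c_X=g(o,\cdot)$, the covariance structure of the Gaussian free field, which completes the proof. The only genuine subtlety beyond soft limiting arguments is the construction of $\nu_{o,v}$ and the bound $|\nu_{o,v}|\le\nu$; transience enters exactly once, to guarantee $\psi\in L^2(\nu)$.
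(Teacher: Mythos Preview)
Your proof is correct and follows the same high-level strategy as the paper: compute that the spectral measure of the Gaussian free field has density $d/(d-t)$ with respect to $\nu$, then invoke Lemma~\ref{lem:gyok} (which is what underlies Theorem~\ref{tfiid}(i)$\Rightarrow$(ii)) to produce a spherical linear factor of i.i.d.\ process with that spectral measure. The paper's proof stops there, citing ``Gaussian processes are determined by their covariance structures'' without explaining why the constructed linear factor process and the GFF actually have the \emph{same} covariance structure --- on a general vertex-transitive graph, equal spectral measure does not by itself force equal covariance structure. You fill this in explicitly: via the off-diagonal spectral measures $\nu_{o,v}$ (with $|\nu_{o,v}|\le\nu$ by Cauchy--Schwarz on the projection-valued measure and vertex-transitivity), you show $c_X(v)=\lim_n[p_n^2(A)\delta_o](v)=\int\psi^2\,d\nu_{o,v}=g(o,v)$. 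This is a genuine and correct addition to the paper's argument; the off-diagonal spectral measure is exactly the right tool to pass from $L^1(\nu)$-convergence of densities to pointwise convergence of covariance structures.

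One minor imprecision: transience gives $\int\frac{d}{d-t}\,d\nu<\infty$ and hence no atom at $t=d$, but it does \emph{not} rule out an atom at $t=-d$. Fortunately you do not actually need this. On $[-d,0]$ one has $|q_N(t)|=\bigl|\tfrac{1-(t/d)^{N+1}}{1-t/d}\bigr|\le 2$, and on $[0,d)$ one has $0\le q_N(t)\le\frac{d}{d-t}$, so $|q_N|\le\frac{d}{d-t}+2\in L^1(\nu)$ holds on all of $[-d,d)$ and dominated convergence goes through regardless of what happens at $-d$.
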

\begin{proof}  Consider the function defined by \[\sum_{k=0}^{\infty} \bigg (\frac xd\bigg)^k =\frac{d}{d-x} \qquad (x\in (-d, d)).\]
The transitivity of $G$ implies that this function is in $L^1([-d,d], \nu)$, where $\nu$ is the spectral measure of $G$.  It follows from the definition of the Gaussian free field that this is the density function of its spectral measure.  Theorem \ref{tfiid} and the fact that Gaussian processes are determined by their covariance structures imply the statement.
\end{proof}

\subsection{Gaussian Markov processes on the tree}  
We will need the following family of polynomials, which plays an important role in understanding the radial functions on the regular tree. Let $(r_n)$ be the unique sequence of polynomials satisfying the following recurrence equations: 
\begin{equation}\label{rekurzio}\begin{split}
r_0(x)&=1; \\ xr_0(x)&=r_1(x); \\ xr_n(x)&=(d-1)r_{n-1}(x)+ r_{n+1}(x) \qquad (n\geq 1).\end{split}
\end{equation} 
The polynomial $r_n$ has degree $n$. These are sometimes called Dunau polynomials, and they are closely related to Chebyshev polynomials of the second kind (see e.g.\ Alon, Benjamini, Lubetzky and Sodin 2007, Backhausz, Szegedy and Vir\'ag 2014, Arnaud and Letac 1984, Fig\`a-Talamanca and Nebbia 1991). On the other hand, they are orthogonal with respect to the Plancherel measure. Moreover, the following can be proved by induction: 
$r_0(A)\delta_o=\delta_o$, and for $n\geq 1$ we have 
\begin{equation}\label{rk}[r_n(A)\delta_0](v)=\begin{cases} 1, & \text{ if } |v|=n; \\ 0, & \text{ otherwise.}\end{cases}\end{equation} 
This implies that all radial functions on the tree are given by limits of polynomials of $A$.

\begin{proof}[Proof of Proposition $\ref{prop:GM}$]
Let $c_{\varrho}: V(T_d)\rightarrow \mathbb R$ be defined by $c_{\varrho}(v)=\varrho^{|v|}$.  By equation $\eqref{eq:gm}$, this is the covariance structure of the Gaussian  Markov process.
First we decide whether $c_{\varrho}$ is in $\ell^2(T_d)$:
\[1+\sum_{k=1}^{\infty} d(d-1)^{k-1}\varrho^{2k}<\infty \Leftrightarrow |\varrho|<\frac{1}{\sqrt{d-1}}.\]
Hence in the case
$|\varrho|<\frac{1}{\sqrt{d-1}}$ Lemma \ref{lem:block} applies. However,  we compute this density function for $|\varrho|<\frac{1}{\sqrt{d-1}}$, in order to deal with the case $|\varrho|=\frac{1}{\sqrt{d-1}}$. Equations \eqref{eq:gm} and \eqref{rk} imply that 
\[c_{\varrho}=\sum_{k=0} \varrho^k r_k(A) \delta_o.\]

Based on the proof of Lemma \ref{lem:block}, we obtain that the density function (with respect to the spectral measure of the tree) of the spectral measure of the Gaussian Markov process with parameter $\varrho$ is the following:
\[f_{\varrho}(x)=\sum_{k=0}^{\infty} \varrho^k r_k(x).\]
To compute this sum, let 
\[g(x,y)=\sum_{k=1}^{\infty} r_k(x) y^k.\] 
Then by recurrence equation \eqref{rekurzio} we have
\begin{align*}
xg(x,y)&=(d-1)\sum_{k=1}^{\infty} r_{k-1}(x) y^k+\sum_{k=1}^{\infty} r_{k+1}(x) y^k\\
&= y(d-1)\sum_{k=0}^{\infty} r_k(x) y^k+\frac1y\sum_{k=2}^{\infty} r_k(x) y^k\\
&=y(d-1)r_k(0)+y(d-1)g(x,y)+\frac 1y[g(x,y)-r_1(x)y]\\
&=y(d-1)+y(d-1)g(x,y)+\frac 1yg(x,y)-x.\end{align*}

This yields 
\[g(x,y)=\frac{xy-y^2(d-1)}{1+y^2(d-1)-xy}.\] 

Hence the density function of $\mu$ is the following: 
\[f_{\varrho}(x)=g(x, \varrho)+1=\frac{1}{1+\varrho^2(d-1)-x\varrho} \qquad \Big(|\varrho|<\frac{1}{\sqrt{d-1}}\Big).\]
 
For $\varrho=\frac{1}{\sqrt{d-1}}$ the covariance structure is not in $\ell^2(T_d)$. However, 
the calculation above works and we get  
\[f(x)=\sum_{k=0}^{\infty} r_k(x)(d-1)^{-k/2}=\frac{1}{2-\frac{x}{\sqrt{d-1}} }.\]
This has a singularity only at the endpoint of the Plancherel interval, namely, at $2\sqrt{d-1}$. 
Since the density function of the Plancherel measure behaves like $\sqrt{x}$ at the endpoints of 
its support, this function is still integrable with respect to the Plancherel measure: $f\in L^1([-d,d], \nu)$. 
Therefore this is the density function of the spectral measure of the Gauss Markov process with respect to $\nu$. Hence $c_{\varrho}$ is the covariance structure of a factor of i.i.d.\ process if $|\varrho|\leq 1/\sqrt{d-1}$.
 Similar argument works for $\varrho=-1/\sqrt{d-1}$.) For Gauss Markov processes this implies that the process is linear factor of i.i.d.\ itself, according to Theorem \ref{tfiid}.  
 
 As for the other direction, we refer to \cite*{decay}: from that result it follows immediately that covariance structure with larger absolute value of $\varrho$ can not be factor of i.i.d. 
 \end{proof}
 
 \subsection{Branching Markov chains}
 We can also  examine the covariance structures of branching Markov chains on the $d$-regular tree. Fix a reversible 
Markov chain with finite state space $S$ and transition matrix $M$.  
Choose the state of the root $o$ uniformly at random. Then the Markov chain spreads out: 
the neighbors of the root get their states given the state of the root and according 
to the transition probabilities given by $M$. The transitions are conditionally independent given the 
state of the root. This is continued to get the states of the neighbors of 
the neighbors of the root, and so on. This will be an invariant random process on $T_d$. One can get the Potts and the Ising model 
as particular cases \citep[see e.g.][]{evans, sly}. 

Let $\varphi$ be eigenvector of $M$ corresponding to the largest eigenvalue. Then the correlation of $\varphi(X_o)$ and $\varphi(X_v)$ is  
$\varrho^{|v|}$, where $\varrho$ is 
the spectral radius of the transition matrix $M$. Therefore the calculation above implies that this covariance structure  is a factor of i.i.d. covariance structure if and only if $|\varrho|\leq 1/\sqrt{d-1}$. However, this does not imply that the process itself is factor of i.i.d.\ in this case. 

 A particular case is the Ising model, where $S=\{-1, 1\}$ and
\[M=\left(\begin{array}{cc} \frac{1+\varrho}{2} &  \frac{1-\varrho}{2}\\  \frac{1-\varrho}{2} &  \frac{1+\varrho}{2}\end{array}\right).\] 
It is known that the process is itself a factor of i.i.d.\ if $|\varrho|\leq \frac{1}{d-1}$, see e.g.\ \cite{lyons}. It is open whether the Ising model itself is factor of i.i.d.\ in the  case $1/(d-1)<|\varrho|\leq 1/\sqrt{d-1}$.

 \subsection{Open questions} We finish the paper with some open questions. 
 
 \begin{enumerate}

  \item Is there a spectral description of the structure determined by the expectation of the product of the random variables at more than two vertices? In  this case not just the distance matters, the configuration of the vertices has to be fixed. 
  
  \item Is there a spectral description of the moments of the random variables at the vertices? That is, we assign $X_v, X^2_v, \ldots, X^k_v$ to each vertex instead of $X_v$, and we can take covariance matrices of the vectors assign to a pair of vertices. 
  
 \end{enumerate}
 
  \subsection*{Acknowledgement.}  The authors are especially grateful to Russell Lyons for useful comments that led to a shorter proof of Proposition \ref{all:dtv}.  The authors thank P\'eter Csikv\'ari, Viktor Harangi and Mustazee Rahman for reading previous versions of the manuscript and providing valuable suggestions, G\'erard Letac for references,  G\'abor Pete for the question on the covariance structure of $\bar d_2$-limits of factor of i.i.d.\ processes,  and Bal\'azs Szegedy for useful discussions. The first author was partially supported by the National Research, Development and Innovation Office (NKFIH, grant no.  109684), and by the MTA R\'enyi Institute Lend\"ulet Limits of Structures Research Group. The second author was supported by Marie Sk\l odowska-Curie actions "Spectra", by the NSERC Discovery Accelerator Supplements Program and by the MTA R\'enyi ''Lend\"ulet'' Groups and Graphs Research Group.

\bibliographystyle{dcu}
\bibliography{./corrfiid}

\end{document}